\numberwithin{equation}{section}
\newtheorem{thm}[equation]{Theorem}
\newtheorem{lemma}[equation]{Lemma}
\newtheorem{cor}[equation]{Corollary}
\newtheorem{prop}[equation]{Proposition}
\theoremstyle{definition}
\newtheorem{defn}[equation]{Definition}
\theoremstyle{remark}
\newtheorem*{rmk}{Remark}
\DeclareMathOperator{\TC}{\mathrm{TC}}
\newcommand{\from}{\colon}
\newcommand{\R}{\mathbb{R}}
\newcommand{\Z}{\mathbb{Z}}
\DeclareMathOperator{\genus}{genus}
\DeclareMathOperator{\cat}{cat}
\title{Relative Topological Complexity and Configuration Spaces}
\author{Bryan Boehnke \and Steven Scheirer \and Shuhang Xue}
\address{Department of Mathematics and Statistics, Carleton College, Northfield, MN 55057}
\email{\href{mailto:boehnkeb@carleton.edu}{boehnkeb@carleton.edu}, 
    \href{mailto:sscheirer@carleton.edu}{sscheirer@carleton.edu},
    \href{mailto:xues@carleton.edu}{xues@carleton.edu}}
\begin{document}
\keywords{Topological complexity, configuration spaces}
\subjclass{55M30, 55R80}
\maketitle

\begin{abstract}
    Given a space $X,$ the topological complexity of $X,$ denoted by $\TC(X)$, can be viewed as the minimum number of ``continuous rules" needed to describe how to move between any two points in $X.$ Given subspaces $Y_1$ and $Y_2$ of $X,$ there is a ``relative" version of topological complexity, denoted by $\TC_X(Y_1\times Y_2),$ in which one only considers paths starting at a point $y_1\in Y_1$ and ending at a point $y_2\in Y_2,$ but the path from $y_1$ to $y_2$ can pass through any point in $X.$ We discuss general results that provide relative analogues of well-known results concerning $\TC(X)$ before focusing on the case in which we have $Y_1=Y_2=C^n(Y),$ the configuration space of $n$ points in some space $Y,$ and $X=C^n(Y\times I),$ the configuration space of $n$ points in $Y\times I,$ where $I$ denotes the interval $[0,1].$ Our main result shows $\TC_{C^n(Y\times I)}(C^n(Y)\times C^n(Y))$ is bounded above by $\TC(Y^n)$ and under certain hypotheses is bounded below by $\TC(Y).$
\end{abstract}

\section{Introduction}\label{sec:Intro}

Given a topological space $X,$ let $P(X)=X^I,$ the space of all continuous maps $\sigma\from I\to X,$ where $I$ denotes the unit interval $[0,1].$ We equip $P(X)$ with the compact-open topology. There is a fibration $p\from P(X)\to X\times X$ which sends a path $\sigma$ to its endpoints:
\begin{equation}\label{eqn:TCFibration}
    p(\sigma)=(\sigma(0),\sigma(1)).
\end{equation}

A section of this fibration is a function $s\from X\times X\to P(X)$ such that $p\circ s$ is the identity on $X\times X.$ In other words, $s$ is a function which takes a pair of points in $X$ as input and produces a path between those points. Intuitively speaking, if $s$ is continuous at a point $(x,y)\in X\times X,$ then a slight perturbation of the point $(x,y)$ results in a slight perturbation of the path $s(x,y).$ One easily shows that a continuous section $s\from X\times X\to P(X)$ exists if and only if the space $X$ is contractible \cite{farber_2003}. This is the motivation for Farber's definition of the \textit{topological complexity} of $X.$

\begin{defn}{\rm \cite{farber_2003}}\label{defn:TC}
Given a space $X$, let $\TC(X)$ denote the smallest integer $k$ such that there exists an open cover of $X\times X$ by sets $U_1,\dots,U_k$ which admit continuous sections $s_i\from U_i\to P(X)$ for each $i.$ If no such $k$ exists, set $\TC(X)=\infty.$
\end{defn}

\begin{rmk}
Farber shows in \cite{farber_2008} that for the case in which $X$ is a Euclidean neighborhood retract, the open cover $U_1,\dots, U_k$ in Definition \ref{defn:TC} can be replaced with a \textit{partition} of $X\times X$ into sets $K_1,\dots,K_k$.
\end{rmk}

Many authors use a ``reduced" version of topological complexity, which is one less than the value of $\TC(X)$ given in Definition \ref{defn:TC}. We work exclusively with the ``unreduced" version given in Definition \ref{defn:TC}. The sections $s_i$ can be thought of as ``continuous rules" which specify how to move from a point $x$ to another point $y$ where $(x,y)\in U_i.$ The sets $U_i$ and sections $s_i$ are collectively referred to as a \textit{motion planning algorithm} in $X.$ The space $X$ is often interpreted as the space of all configurations of a robot or a system of robots, and the computation of $\TC(X)$ addresses the question of how to move from any initial configuration to any final configuration of the robot(s). However, one can imagine situations in which the set of all pairs of initial-final configurations of the robot(s) forms only a subset $A\subseteq X\times X,$ but it is desirable to allow the robot(s) to move anywhere in $X$ as they move from the initial configuration to the final configuration. This leads to the notion of \textit{relative} topological complexity, which is denoted by $\TC_X(A).$ Let $P_X(A)$ denote the subspace of $P(X)$ which consists of all paths in $X$ from a point $x$ to a point $y$ with $(x,y)\in A.$

\begin{defn}{\rm\cite{farber_2008}}\label{defn:RelativeTC}
Given a subspace $A\subseteq X\times X,$ let $\TC_X(A)$ denote the smallest integer $k$ such that there exists an open cover of $A$ by sets $U_1,\dots, U_k$ which admit continuous sections $s_i\from U_i\to P_X(A).$ If no such $k$ exists, set $\TC_X(A)=\infty.$
\end{defn}

\begin{rmk}
As with $\TC(X),$ if $X$ is a Euclidean neighborhood retract, the open cover in Definition \ref{defn:RelativeTC} can be replaced with a partition of $A.$
\end{rmk}

In \cite{short_2018}, Short introduces the relative topological complexity of a pair $(X,Y),$ which is denoted by $\TC(X,Y)$ and can be defined by $\TC(X,Y)=\TC_X(X\times Y).$ Thus, $\TC(X,Y)$ addresses the problem of finding rules to move from points in $X$ to points in $Y\subseteq X$ along paths in $X.$ We will be interested in $\TC_X(Y_1\times Y_2)$ for subsets $Y_1$ and $Y_2$ of $X.$ Thus, we address the problem of finding rules to move from points in $Y_1$ to points in $Y_2$ along paths in $X.$ In Section \ref{sec:GeneralResults} we discuss general results concerning $\TC_X(Y_1\times Y_2)$ which give analogues of well-known results regarding $\TC(X)$ and provide elementary examples. 

In Section \ref{sec:ConfigSpaces}, we discuss the case in which $Y_1=Y_2=C^n(Y),$ where, for any space $Y$, $C^n(Y)$ denotes the configuration space of $n$ points in $Y$. In other words, $C^n(Y)$ may be viewed as the space of all $n$-tuples of \textit{distinct} points in $Y$. We will view $Y$ as a subspace of some larger space $Y'$  and in turn view $C^n(Y)$ as a subspace of $C^n(Y').$ If we view each point in $C^n(Y')$ as a configuration of $n$ robots, we can interpret $\TC_{C^n(Y')}(C^n(Y)\times C^n(Y))$ as the minimum number of continuous rules needed to describe how to move all $n$ robots from any configuration in $Y$ to any other configuration in $Y$, while allowing the robots to move along paths through $Y'$ (while avoiding collisions). We will primarily be interested in the case in which $Y'$ is of the form $Y\times I,$ which contains $Y$ as the subspace $Y\times \{0\}.$ Let $X=C^n(Y\times I).$ In this context, the paths involved in $\TC_X(C^n(Y)\times C^n(Y))$ start and end in configurations in $Y$ but are allowed to move ``above" $Y$ during the intermediate stages. We show that under certain hypotheses, we have 
\[
\TC(Y)\le\TC_X(C^n(Y)\times C^n(Y))\le \TC(Y^n).
\]
The upper bound $\TC_X(C^n(Y)\times C^n(Y))\le \TC(Y^n)$ is true in general. We also show that this bound is sharp in some cases, but not in others.

\section{General Results}\label{sec:GeneralResults}

We first recall standard results regarding $\TC(X).$

\begin{thm}{\rm \cite{farber_2003,farber_2004}}\label{thm:StandardTC}
Let $X$ be any topological space.
\begin{enumerate}[(a)]
    \item\label{item:TC1} We have $\TC(X)=1$ if and only if $X$ is contractible.
    \item\label{item:HtpyInv} If $X$ is homotopy equivalent to some space $X'$, then $\TC(X)=\TC(X').$
    \item\label{item:TCUB} Let $X$ be a CW complex of dimension $n.$ If $\pi_j(X)=0$ for $j\le s$, then $\TC(X)< \frac{2n+1}{s+1}+1.$ In particular, if $X$ is path-connected, we have $\TC(X)\le 2n+1.$
    \item\label{item:TCLB} Let $\mathbf{k}$ be a field and consider elements 
    \[
    \alpha_1,\dots,\alpha_k\in \ker(\smallsmile\from H^*(X;\mathbf{k})\otimes H^*(X;\mathbf{k})\to H^*(X;\mathbf{k})).
    \]
    If $\alpha_1\cdots\alpha_k\ne 0\in H^*(X;\mathbf{k})\otimes H^*(X;\mathbf{k}),$ then $\TC(X)> k.$
\end{enumerate}
\end{thm}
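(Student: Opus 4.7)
The plan is to prove the four parts in order, treating each as an instance of a general principle about the sectional category of the path fibration $p\from P(X)\to X\times X$ from \eqref{eqn:TCFibration}.

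For part \eqref{item:TC1}, I would exhibit a direct equivalence between global sections and contractions. If $\TC(X)=1$, a single section $s\from X\times X\to P(X)$ exists, so fixing any $x_0\in X$ and defining $H\from X\times I\to X$ by $H(x,t)=s(x_0,x)(t)$ yields a homotopy from the constant map $x_0$ to $\mathrm{id}_X$. Conversely, given a contraction $H$ onto $x_0$, the concatenation of $t\mapsto H(x,1-t)$ with $t\mapsto H(y,t)$ (suitably reparameterized) defines a continuous global section. For part \eqref{item:HtpyInv}, I would transport motion planners across a homotopy equivalence $f\from X\to X'$ with inverse $g$ and chosen homotopy $h\from gf\simeq\mathrm{id}_X$. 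Given a cover $\{U_i'\}$ of $X'\times X'$ with sections $s_i'$, set $U_i=(f\times f)^{-1}(U_i')$ and define a section on $U_i$ by taking $(x,y)$ to the concatenation of $h(x,\cdot)$ reversed, the path $g\circ s_i'(f(x),f(y))$, and $h(y,\cdot)$. Continuity is routine once one fixes a convenient reparameterization convention.

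For part \eqref{item:TCUB}, I would appeal to Schwarz's general upper bound for the sectional category of a fibration: if the fiber is $(s-1)$-connected and the base is a CW complex of dimension $d$, then the sectional category is less than $\frac{d}{s}+1$. The fiber of $p$ is $\Omega X$, which is $(s-1)$-connected whenever $\pi_j(X)=0$ for $j\le s$, and the base $X\times X$ has dimension $2n$. Substituting and being careful about how Schwarz's inequality reads when $s$ is shifted (the fiber connectivity is $s-1$, but the hypothesis is stated in terms of $s$ from the base) yields the bound $\TC(X)<\frac{2n+1}{s+1}+1$; the special case $s=0$ recovers $\TC(X)\le 2n+1$ for path-connected $X$.

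For part \eqref{item:TCLB}, I would use the classical cohomological lower bound for sectional category applied to $p$. Constant paths give a section of $p$ over the diagonal $\Delta X\subset X\times X$, so any class in $H^*(X\times X;\mathbf{k})$ that restricts to zero on $\Delta X$ is a ``zero-divisor'' in the sense that it pulls back to zero under a partial section. Via the K\"unneth isomorphism $H^*(X\times X;\mathbf{k})\cong H^*(X;\mathbf{k})\otimes H^*(X;\mathbf{k})$, restriction to $\Delta X$ is cup product, so these zero-divisors are exactly $\ker(\smallsmile)$. The standard fact that the sectional category strictly exceeds the nilpotency of the ideal of zero-divisors then gives $\TC(X)>k$ whenever a nonzero $k$-fold product $\alpha_1\cdots\alpha_k$ exists. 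The main obstacle is being precise about invoking Schwarz's two theorems (the upper bound in terms of connectivity/dimension and the lower bound in terms of zero-divisor cup-length); everything else is assembly.
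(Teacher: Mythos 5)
The paper does not actually prove Theorem~\ref{thm:StandardTC}; it cites \cite{farber_2003,farber_2004} and then mirrors those arguments when proving the relative analogues (Theorems~\ref{thm:TC1}, \ref{thm:HtpyInv}, \ref{thm:RelTCUB}, \ref{thm:RelTCLB}). Your outline is exactly that standard route: direct constructions of sections and homotopies for (a) and (b), and applications of Schwarz's Theorems~\ref{thm:homotopybound} and~\ref{thm:zerodivisors} to the path fibration for (c) and (d), so there is no substantive disagreement with the paper's methodology. The one imprecision worth fixing is in (c): the Schwarz bound for a fibration with $(s-1)$-connected fiber over a $d$-dimensional CW base is $\genus(p) < \frac{d+1}{s+1}+1$ (as in the paper's Theorem~\ref{thm:homotopybound}), not $\frac{d}{s}+1$; your stated lemma, taken literally, would yield only the weaker conclusion $\TC(X)<\frac{2n}{s}+1$. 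You do hedge about the index shift and write down the correct final inequality $\TC(X)<\frac{2n+1}{s+1}+1$, so the issue is one of quoting the input theorem accurately rather than a flaw in the plan itself.
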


The main goal of this section is to discuss analogues of these results for $\TC_X(Y_1\times Y_2).$ Our proofs are generalizations of the proofs of the results of Theorem \ref{thm:StandardTC}. These results can also be compared with the results in \cite{short_2018} which address the case in which $Y_1=X.$  

We first mention some obvious inequalities (see \cite{farber_2008}). If $Y_1$ and $Y_2$ are subspaces of $X,$ we have 
\begin{equation}
    \TC_X(Y_1\times Y_2)\le \TC(Y_1\cup Y_2).
\end{equation}
In particular, taking $Y_1=Y_2=Y\subseteq X,$ we have $\TC_X(Y\times Y)\le \TC(Y).$ Intuitively, this inequality comes from the fact that when determining $\TC_X(Y\times Y),$ we have more choices of paths between points in $Y$ than we do when determining $\TC(Y).$ Next, if $Y_1\subseteq Y_1'\subseteq X$ and $Y_2\subseteq Y_2'\subseteq X,$ we have 
\begin{equation}\label{eqn:SubspaceUB}
    \TC_X(Y_1\times Y_2)\le \TC_X(Y_1'\times Y_2').
\end{equation}
In particular, taking $Y_1'=Y_2'=X,$ we have $\TC_X(Y_1\times Y_2)\le \TC_X(X\times X)=\TC(X).$ Intuitively, this inequality comes from the fact that when determining $\TC_X(Y_1\times Y_2)$, we have fewer pairs of initial and terminal points to consider than we do in determining $\TC(X).$

Now we turn to proving analogues of the results of Theorem \ref{thm:StandardTC}. In our proofs, we make implicit use of the following standard facts:

\begin{lemma}
Let $X,\ Y,$ and $Z$ be any topological spaces, and equip $P(Y)$ and $P(Z)$ with the compact-open topology.
\begin{enumerate}
\item A function $f\from X\times I\to Y$ is continuous if and only if the function $F\from X\to P(Y)$ given by $F(x)(t)=f(x,t)$ is continuous.
\item Let $F_1$ and $F_2$ be continuous functions $X\to P(Y)$ and consider continuous functions $\phi_1\from J_1\to I$ and $\phi_2\from J_2\to I,$ where $J_1$ and $J_2$ are closed subsets of $I$ with $J_1\cup J_2=I.$ If $F_1(x)(\phi_1(t))=F_2(x)(\phi_2(t))$ for all $y$ and each $t\in J_1\cap J_2,$ the function $G\from X\to P(Y)$ given by 
\[
G(x)(t)=\begin{cases}
F_1(x)(\phi_1(x)),& t\in J_1\\
F_2(x)(\phi_2(x)),& t\in J_2
\end{cases}
\]
is continuous.
\item Let $s\from X\to P(Y)$ and $f\from Y\to Z$ be continuous. The function $s'\from X\to P(Z)$ given by $s'(x)(t)=f(s(x)(t))$ is continuous.
\end{enumerate}
\end{lemma}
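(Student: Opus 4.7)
The plan is to reduce all three statements to the exponential law for mapping spaces: since $I$ is compact Hausdorff (in particular locally compact Hausdorff), there is a natural bijection between continuous maps $X \times I \to Y$ and continuous maps $X \to Y^I = P(Y)$, given by $f \leftrightarrow F$ with $F(x)(t) = f(x,t)$. With this in hand, parts (2) and (3) reduce to statements about maps out of $X \times I$, where standard point-set techniques apply.

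Part (1) is essentially a direct statement of the exponential law. Concretely, if $f$ is continuous, then for every subbasic open set $\langle K, U\rangle = \{\sigma \in P(Y) : \sigma(K) \subseteq U\}$ of $P(Y)$ with $K \subseteq I$ compact and $U \subseteq Y$ open, the tube lemma applied to $f^{-1}(U) \supseteq \{x\}\times K$ shows that $F^{-1}(\langle K,U\rangle)$ is open. Conversely, if $F$ is continuous, the evaluation map $\mathrm{ev}\from P(Y) \times I \to Y$ is continuous (again because $I$ is locally compact Hausdorff), and $f = \mathrm{ev} \circ (F \times \mathrm{id}_I)$ is then a composite of continuous maps.

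For part (2), pass to the adjoint maps $f_i(x,t) = F_i(x)(t)$, which are continuous on $X \times I$ by part (1). Define $g\from X \times I \to Y$ piecewise by $g(x,t) = f_1(x, \phi_1(t))$ on $X \times J_1$ and $g(x,t) = f_2(x, \phi_2(t))$ on $X \times J_2$. Since $J_1$ and $J_2$ are closed in $I$, the sets $X \times J_1$ and $X \times J_2$ are closed in $X \times I$ and cover it; the restrictions of $g$ to these closed subsets are continuous compositions, and the hypothesis guarantees that the two definitions agree on $X \times (J_1 \cap J_2)$. The pasting lemma for closed sets then yields continuity of $g$, and part (1) translates this back to continuity of $G$. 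For part (3), part (1) makes $(x,t) \mapsto s(x)(t)$ continuous on $X \times I$, so postcomposition with the continuous $f$ produces a continuous map $X \times I \to Z$, which by part (1) again corresponds to the desired continuous $s'\from X \to P(Z)$.

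There is no real obstacle here: everything reduces to the exponential law plus the pasting lemma, both standard tools. The only care required is confirming the hypotheses (local compactness of $I$ for the exponential law, and closedness of $J_1$ and $J_2$ in $I$ for the pasting lemma). If desired, one could simply cite a standard reference for part (1) and spend only a sentence or two each on parts (2) and (3).
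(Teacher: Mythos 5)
The paper states this lemma without proof, labeling it a collection of ``standard facts,'' so there is no in-paper argument to compare against. Your proposal is correct and is the standard way to establish these facts: reduce everything to the exponential law (the natural bijection between continuous maps $X\times I\to Y$ and continuous maps $X\to P(Y)$, valid because $I$ is locally compact Hausdorff), then apply the pasting lemma for closed covers in part (2) and composition of continuous maps in part (3). The key hypotheses you flag -- local compactness of $I$ for the exponential law in both directions, and closedness of $J_1,J_2$ so that $X\times J_1$ and $X\times J_2$ give a closed cover of $X\times I$ -- are exactly the ones that need to be checked, and they hold. One side remark: the displayed formula in the lemma as printed has the typo $F_i(x)(\phi_i(x))$ where $F_i(x)(\phi_i(t))$ is clearly intended, and ``for all $y$'' should read ``for all $x$''; your proof correctly works with the intended reading.
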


Now, we address the cases in which $\TC_X(Y_1\times Y_2)=1,$ giving an analogue of Theorem \ref{thm:StandardTC}\ref{item:TC1}. We first note that any two spaces $Y_1$ and $Y_2$ (which need not be disjoint) can be embedded in a space $X$ such that $\TC_X(Y_1\times Y_2)=1.$ Indeed, by taking $X= C(Y_1\cup Y_2)$ (where $CY$ denotes the cone $(Y\times I)/(Y\times \{1\})$, which contains $Y$ as the subspace $Y\times \{0\}$), we have $\TC(X)=1,$ since $X$ is contractible, and the inequality $\TC_X(Y_1\times Y_2)\le \TC(X)$ shows $\TC_X(Y_1\times Y_2)=1.$ 

As another example, recall given two disjoint spaces $Y_1$ and $Y_2$, the join of $Y_1$ and $Y_2$ is denoted by $Y_1\ast Y_2$ and is formed as a quotient of $Y_1\times Y_2\times I$ under the identifications $(y_1,y_2,0)\sim (y_1,y_2',0)$ and $(y_1,y_2,1)\sim (y_1',y_2,1).$ The join $Y_1\ast Y_2$ contains $Y_1$ and $Y_2$ as the images under the quotient map of the subspaces $Y_1\times Y_2\times \{0\}$ and $Y_1\times Y_2\times \{1\},$ respectively. We denote the image of $(y_1,y_2,t)$ under the quotient map by $[y_1,y_2,t],$ so that for any $y_1\in Y_1$ and $y_2\in Y_2,$ we may identify $[y_1,y_2,0]$ with $y_1,$ and $[y_1,y_2,1]$ with $y_2.$ Given $(y_1,y_2)\in Y_1\times Y_2,$ define a path $s(y_1,y_2)\in P_{Y_1\ast Y_2}(Y_1\times Y_2)$ from $y_1$  to $y_2$ by $s(y_1,y_2)(t)=[y_1,y_2,t].$ This gives a continuous section of $p\from P_{Y_1\ast Y_2}(Y_1\times Y_2)\to Y_1\times Y_2,$ showing 
\[
\TC_{Y_1\ast Y_2}(Y_1\times Y_2)=1.
\]
Note that in this case, $Y_1\ast Y_2$ is not necessarily contractible, but the inclusions $Y_1\hookrightarrow Y_1\ast Y_2$ and $Y_2\hookrightarrow Y_1\ast Y_2$ are both nullhomotopic. Theorem \ref{thm:TC1} shows this property determines the cases in which $\TC_X(Y_1\times Y_2)=1.$ This can be compared with Farber's result in \cite{farber_2008} which shows that for a general subset $A\subseteq X\times X,$ we have $\TC_X(A)=1$ if and only if the projections $\pi_i\from A\to X$ are homotopic ($i=1,2$). 

\begin{thm}\label{thm:TC1}
Let $Y_1$ and $Y_2$ be subspaces of a path-connected space $X.$ We have 
\[
\TC_X(Y_1\times Y_2)=1
\]
if and only if the inclusions $Y_1\hookrightarrow X$ and $Y_2\hookrightarrow X$ are both nullhomotopic.
\end{thm}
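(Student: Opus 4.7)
The plan is a standard two-direction argument, using the continuous section to produce nullhomotopies, and conversely using nullhomotopies (glued along a common basepoint) to produce a single continuous section defined on all of $Y_1\times Y_2$. I will tacitly assume $Y_1$ and $Y_2$ are nonempty; if either were empty, $Y_1\times Y_2=\emptyset$ and the statement is vacuous.

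For the forward direction, assume $\TC_X(Y_1\times Y_2)=1$, so there is a continuous section $s\from Y_1\times Y_2\to P_X(Y_1\times Y_2)$. Fix a point $y_2^\ast\in Y_2$ and define $H\from Y_1\times I\to X$ by $H(y_1,t)=s(y_1,y_2^\ast)(t)$, which is continuous by the exponential law. Then $H(y_1,0)=y_1$ and $H(y_1,1)=y_2^\ast$, so $H$ is a nullhomotopy of the inclusion $Y_1\hookrightarrow X$. Symmetrically, fixing $y_1^\ast\in Y_1$ and setting $H'(y_2,t)=s(y_1^\ast,y_2)(1-t)$ gives a nullhomotopy of $Y_2\hookrightarrow X$.

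For the reverse direction, suppose both inclusions are nullhomotopic. Let $H_i\from Y_i\times I\to X$ be a homotopy with $H_i(y,0)=y$ and $H_i(y,1)\equiv x_i$ for some constant $x_i\in X$, $i=1,2$. Using path-connectedness of $X$, choose a path $\gamma\from I\to X$ with $\gamma(0)=x_1$ and $\gamma(1)=x_2$. Define a section $s\from Y_1\times Y_2\to P_X(Y_1\times Y_2)$ by concatenating $H_1(y_1,\cdot)$, $\gamma$, and the reverse of $H_2(y_2,\cdot)$; explicitly,
\[
s(y_1,y_2)(t)=\begin{cases}
H_1(y_1,3t),& t\in[0,1/3],\\
\gamma(3t-1),& t\in[1/3,2/3],\\
H_2(y_2,3-3t),& t\in[2/3,1].
\end{cases}
\]
The endpoint conditions match at $t=1/3$ (both equal $x_1$) and $t=2/3$ (both equal $x_2$), so the gluing lemma recalled before the statement guarantees continuity, and $s(y_1,y_2)$ is a path from $y_1$ to $y_2$ in $X$. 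Hence a single open set $U_1=Y_1\times Y_2$ suffices, giving $\TC_X(Y_1\times Y_2)=1$.

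The argument is essentially routine; the only mild subtlety is that one does not automatically get nullhomotopies with a common endpoint, so path-connectedness of $X$ is needed to interpolate between $x_1$ and $x_2$. This is the only place the path-connectedness hypothesis on $X$ enters, and indeed without it one would need to assume $Y_1$ and $Y_2$ are nullhomotopic into a common path component, which is the natural refinement in the non-path-connected case.
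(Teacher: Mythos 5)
Your proof is correct and follows essentially the same approach as the paper: fix one coordinate of the section to get nullhomotopies in the forward direction, and in the reverse direction glue the two nullhomotopies along a connecting path (using path-connectedness of $X$) to build a single global section. The only differences are notational.
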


\begin{proof}
If $\TC_X(Y_1 \times Y_2) = 1$, then there exists a continuous section $s\from Y_1 \times Y_2 \to P_X(Y_1 \times Y_2)$. Fix some point $y_2\in Y_2\subseteq X$ and consider the homotopy $h_t\from Y_1 \to X$ defined by
\[
h_t(y_1) = s(y_1, y_2)(t).
\]
For all $y_1 \in Y_1$ we have $h_0(y_1) = y_1$ and $h_1(y_1) = y_2$. Therefore, the inclusion map $Y_1 \hookrightarrow X$ is homotopic to the constant map $Y_1 \to X$ with $y_1 \mapsto y_2$ for all $y_1 \in Y_1$, so $Y_1 \hookrightarrow X$ is nullhomotopic. Next, we can consider the homotopy $h_t\from Y_2 \to X$ defined by
\[
h_t(y_2) = s(y_1, y_2)(1-t)
\]
where $y_1 \in Y_1 \subseteq X$. This similarly establishes a homotopy between the inclusion map $Y_2 \hookrightarrow X$ and the constant map $Y_2 \to X$ with $y_2 \mapsto y_1$, so $Y_2 \hookrightarrow X$ is also nullhomotopic.

Conversely, suppose that the inclusions $Y_1\hookrightarrow X$ and $Y_2\hookrightarrow X$ are both nullhomotopic. Then there exist homotopies $g_t\from Y_1 \to X$ and $h_t\from Y_2 \to X$ which satisfy $g_0(y_1) = y_1$, $g_1(y_1) = x_1$, $h_0(y_2) = y_2$, and $h_1(y_2) = x_2$ for any $y_1 \in Y_1$, $y_2 \in Y_2$, and fixed points $x_1, x_2 \in X$. We can construct a continuous section $s\from Y_1 \times Y_2 \to P_X(Y_1 \times Y_2)$ defined by 
\[
s(y_1, y_2)(t) = \begin{cases} 
g_{3t}(y_1), &0 \leq t \leq \frac{1}{3}\\
\sigma(3t-1), &\frac{1}{3} \leq t \leq \frac{2}{3}\\
h_{3-3t}(y_2), &\frac{2}{3} \leq t \leq 1
\end{cases},
\]
where $\sigma$ is a fixed path between $x_1$ and $x_2$ (which exists because $X$ is path-connected). This section takes the pair $(y_1, y_2)$ to the path in $P_X(Y_1, Y_2)$ that traverses from $y_1$ to $x_1$ along the homotopy $g_t$, takes the fixed path $\sigma$ from $x_1$ to $x_2$, and then finally traverses from $x_2$ to $y_2$ along the homotopy $h_t$ in reverse.
\end{proof}

For example, if $Y_1$ and $Y_2$ are any proper subspaces of the sphere $S^n$, we have 
\[
\TC_{S^n}(Y_1\times Y_2)=1.
\]
This should be compared with the fact that $\TC(S^n)=2$ if $n$ is odd and $\TC(S^n)=3$ if $n$ is even \cite{farber_2003}, and $\TC_{S^n}(Y\times S^n)=2$ where $Y$ is again a proper subset of $S^n$ \cite{short_2018}.

Next, we state the following version of homotopy invariance for $\TC_X(Y_1\times Y_2),$ giving an analogue of Theorem \ref{thm:StandardTC}\ref{item:HtpyInv}. This should be compared with a result from \cite{farber_2008} which says that if $A$ and $B$ are subsets of $X\times X$ such that $A$ can be deformed into $B$ inside of $X\times X,$ then $\TC_X(A)=\TC_X(B).$

\begin{thm}\label{thm:HtpyInv}
Let $Y_1$ and $Y_2$ be subspaces of $X$, let $Y_1'$ and $Y_2'$ be subspaces of $X'$, and suppose there are maps $f\from X\to X',\ f'\from X'\to X,\ \alpha_j\from Y_j\to Y_j'$ and $\alpha'_j\from Y_j'\to Y_j$ ($j=1,2$) such that the following diagrams commute up to homotopy for $j=1,2.$

\[
\begin{tikzcd}
Y_j \arrow[r, hook, "\iota_j"] & X \arrow[d, "f"]\\
Y_j' \arrow[u, "\alpha_j'"] \arrow[r, hook, "\iota'_j"] & X'
\end{tikzcd}
\hspace{5mm}
\begin{tikzcd}
Y_j \arrow[r, hook, "\iota_j"] \arrow[d,"\alpha_j"] & X\\ 
Y_j' \arrow[r, hook, "\iota_j'"] & X' \arrow[u, "f'"]
\end{tikzcd}
\]
Then, $\TC_{X'}(Y'_1\times Y'_2)=\TC_X(Y_1\times Y_2).$ In particular, this equality holds if $f$ is a homotopy equivalence which restricts to homotopy equivalences $Y_j\to Y_j'.$
\end{thm}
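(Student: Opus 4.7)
The plan is to prove both $\TC_{X'}(Y_1' \times Y_2') \le \TC_X(Y_1 \times Y_2)$ and its reverse, obtaining equality. Since the hypotheses are symmetric under the interchange $(X, Y_j, f, \alpha_j) \leftrightarrow (X', Y_j', f', \alpha_j')$ (the two given diagrams swap into each other), it suffices to establish the first inequality; the second follows by the identical argument with roles reversed. The special case in which $f$ is a homotopy equivalence restricting to homotopy equivalences $Y_j \to Y_j'$ supplies $f'$, $\alpha_j$, $\alpha_j'$ and all four homotopies for free, so the last sentence of the theorem is immediate.

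Suppose $\TC_X(Y_1 \times Y_2) = k$, realized by an open cover $U_1, \dots, U_k$ of $Y_1 \times Y_2$ with continuous sections $s_i \from U_i \to P_X(Y_1 \times Y_2)$. The first diagram, for $j = 1, 2$, furnishes homotopies $H_j \from Y_j' \times I \to X'$ from $\iota_j'$ to $f \circ \iota_j \circ \alpha_j'$, so $H_j(y_j', 0) = y_j'$ and $H_j(y_j', 1) = f(\alpha_j'(y_j'))$. I would then set $V_i = (\alpha_1' \times \alpha_2')^{-1}(U_i)$, which is an open cover of $Y_1' \times Y_2'$ by continuity of $\alpha_1' \times \alpha_2'$, and define $\widetilde{s}_i \from V_i \to P_{X'}(Y_1' \times Y_2')$ as the concatenation, on the three thirds of $I$, of the three paths
\[
t \mapsto H_1(y_1', 3t), \qquad t \mapsto f\bigl(s_i(\alpha_1'(y_1'), \alpha_2'(y_2'))(3t - 1)\bigr), \qquad t \mapsto H_2(y_2', 3 - 3t),
\]
which traverse $y_1' \rightsquigarrow f(\alpha_1'(y_1')) \rightsquigarrow f(\alpha_2'(y_2')) \rightsquigarrow y_2'$ inside $X'$. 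Continuity of $\widetilde{s}_i$ is a direct application of the three facts in the lemma preceding the statement: the exponential adjunction turns each $H_j$ into a continuous map $Y_j' \to P(X')$, post-composition with $f$ preserves continuity, and the gluing-at-a-pair-of-closed-subintervals clause handles the concatenation.

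This produces $k$ open sets covering $Y_1' \times Y_2'$ together with continuous sections into $P_{X'}(Y_1' \times Y_2')$, giving $\TC_{X'}(Y_1' \times Y_2') \le k$. Exchanging primes and using the second diagram and $f'$ in place of $f$ yields $\TC_X(Y_1 \times Y_2) \le \TC_{X'}(Y_1' \times Y_2')$, completing the proof. The main obstacle I anticipate is purely bookkeeping: keeping straight which homotopy runs in which direction, and confirming that the concatenated path is well defined (agreements at $t = 1/3$ and $t = 2/3$) and has endpoints in $Y_1'$ and $Y_2'$ rather than merely in $X'$. The actual continuity verifications are routine once the three paths are set up correctly.
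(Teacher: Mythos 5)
Your proposal is correct and follows essentially the same argument as the paper: pull back the cover along $\alpha_1'\times\alpha_2'$, push the sections forward with $f$, and graft on the homotopy-paths from the first square at the two ends, with the reverse inequality by the symmetric argument using the second square. The only cosmetic difference is that the paper names the intermediate paths $\sigma_1,\tilde s_i,\sigma_2$ before concatenating, whereas you write the concatenation directly; the endpoint matches and the handling of the homotopy-equivalence special case are identical in substance.
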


\begin{proof}
Suppose $\TC_X(Y_1\times Y_2)=k.$ Then, we can find open sets $U_1,\dots,U_k$ which cover $Y_1\times Y_2$ and sections $s_i\from U_i\to P_X(Y_1\times Y_2).$ For each $i,$ let $U'_i=(\alpha'_1\times\alpha'_2)^{-1}(U_i),$ so that the sets $U'_1,\dots,U_k'$ form an open cover of $Y_1'\times Y_2'.$ 

Now, fix $(y_1',y_2')\in U_i'\subseteq Y_1'\times Y_2'$ and let $y_j=\alpha_j'(y_j')$ (for $j=1,2$). By definition of $U_i',$ we have $(y_1,y_2)\in U_i,$ so $s_i(y_1,y_2)$ is a path in $X$ from $y_1$ to $y_2.$ Then, the map $\tilde{s}_i\from [0,1]\to X'$ given by $t\mapsto f(s_i(y_1,y_2)(t))$ is a path in $X'$ from $f(y_1)$ to $f(y_2).$

For $j=1,2,$ let $H_{j,t}\from Y_j'\to X'$ be a homotopy of maps which satisfies $H_{j,0}=\iota_j'$ and $H_{j,1}=f\circ\iota_j\circ\alpha_j'.$ Keeping $y_j'$ fixed, the map $\sigma_j\from [0,1]\to X'$ given by $t\mapsto H_{j,t}(y_j')$ is a path from $\iota_j'(y_j')=y_j'$ to $f(\iota_j(\alpha_j'(y_j')))=f(y_j).$ Define a path $s_i'(y'_1,y_2')\in P_{X'}(Y_1'\times Y_2')$ by
\[
s_i'(y_1',y_2')(t)=
\begin{cases}
\sigma_1(3t),&0\le t\le\frac{1}{3}\\
\tilde{s}_i(3t-1),&\frac{1}{3}\le t\le\frac{2}{3}\\
\sigma_2(3-3t),&\frac{2}{3}\le t\le 1
\end{cases}.
\]

Defining $s_i'(y_1',y_2')$ in this manner for each pair $(y_1',y_2')\in U_i'$ for $i=1,\dots,k$ shows that we have $\TC_{X'}(Y_1'\times Y_2')\le k=\TC_{X}(Y_1\times Y_2).$ 
A symmetric argument shows that we also have $\TC_{X}(Y_1\times Y_2)\le \TC_{X'}(Y_1'\times Y_2'),$ proving $\TC_{X}(Y_1\times Y_2)= \TC_{X'}(Y_1'\times Y_2').$
The final statement in the theorem follows by taking $\alpha_j=f|_{Y_j}$ and letting $f'$ and $\alpha_j'$ be a homotopy inverses for $f$ and $\alpha_j,$ respectively.
\end{proof}

Before discussing analogues of parts \ref{item:TCUB} and \ref{item:TCLB} of Theorem \ref{thm:StandardTC}, we briefly discuss the relationship between topological complexity and Schwarz's notion of the \textit{genus} of a fibration.

\begin{defn}{\rm \cite{schwarz_1965}}
Let $p\from E\to B$ be a fibration. The genus of $p$, denoted by $\genus(p),$ is the smallest integer $k$ such that there is a cover of $B$ by open sets $U_1,\dots, U_k$ which admit continuous sections $U_i\to E.$ If no such $k$ exists, let $\genus(p)=\infty.$
\end{defn}

From this, we see that $\TC(X)=\genus(p),$ where $p\from P(X)\to X\times X$ is the fibration given in (\ref{eqn:TCFibration}), and if $\iota_j$ denotes the inclusion $Y_j\hookrightarrow X$ ($j=1,2$), then $\TC_X(Y_1\times Y_2)=\genus(q),$ where $q$ is the pullback fibration of $p$ under $\iota_1\times\iota_2.$ Theorems \ref{thm:homotopybound} and \ref{thm:zerodivisors} give general upper and lower bounds on $\genus(p),$ which we use to give upper and lower bounds on $\TC_X(Y_1\times Y_2)$ in Theorems \ref{thm:RelTCUB} and \ref{thm:RelTCLB}.

\begin{thm}{\rm \cite{schwarz_1965}} \label{thm:homotopybound}
Let $B$ be a $CW$ complex, and let $p\from E \to B$ be a fibration with fiber $F$ such that $\pi_j(F) = 0$ for $j < s$. Then,
\[
\genus(p) < \frac{\dim(B)+1}{s+1} + 1.
\]
\end{thm}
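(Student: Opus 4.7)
The plan is to exhibit, for $k := \lceil (\dim(B) + 1)/(s+1) \rceil$, an open cover $W_1, \ldots, W_k$ of $B$ such that the fibration $p$ admits a continuous section over each $W_i$. Since $\lceil x \rceil < x + 1$ for every real $x$, this would give $\genus(p) \le k < (\dim(B)+1)/(s+1) + 1$, yielding the desired bound.

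The homotopy-theoretic input is obstruction theory. Because $\pi_j(F) = 0$ for $j < s$, the successive obstructions to extending a section of $p$ across a CW complex $K$, which live in $H^{j+1}(K;\pi_j(F))$ for $j < s$, all vanish. Hence $p$ admits a section over any space having the homotopy type of a CW complex of dimension at most $s$, and any such section pulls back along a deformation retraction.

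To construct the cover, triangulate $B$ (reducing the general CW case by a standard simplicial approximation argument) and pass to the barycentric subdivision $B'$. Each vertex of $B'$ is the barycenter of a unique simplex of $B$; let $\ell(v)$ denote the dimension of that simplex, so $\ell(v) \in \{0,1,\ldots,\dim(B)\}$. Partition these labels into $k$ consecutive blocks $J_0, \ldots, J_{k-1}$, each of cardinality at most $s+1$. For each $i$, define $W_i$ to be the union of the open stars in $B'$ of all vertices $v$ with $\ell(v) \in J_i$; these are open and cover $B$. Inside each simplex of $B'$, straight-line projection onto the face spanned by those of its vertices carrying labels in $J_i$ assembles into a deformation retraction of $W_i$ onto the subcomplex $B_i \subset B'$ spanned by the vertices with labels in $J_i$. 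A simplex of $B'$ corresponds to a strictly ascending chain of simplices of $B$, so if all of its vertices carry labels in $J_i$ then it has at most $|J_i| \le s+1$ vertices, hence dimension at most $s$. Thus $\dim(B_i) \le s$, and obstruction theory produces a section over $B_i$, which then pulls back to one over $W_i$.

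I expect the main obstacle to be verifying that the per-simplex straight-line projections genuinely glue into a continuous deformation retraction of $W_i$ onto $B_i$: the target face varies from simplex to simplex depending on which of its vertices happen to lie in the block $J_i$, so one must check that the projections agree on shared faces and behave correctly near the boundary $\partial W_i$. Once the retractions are established, the arithmetic $k(s+1) \ge \dim(B) + 1$ delivers the count.
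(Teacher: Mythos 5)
This theorem appears in the paper only as a citation to Schwarz's 1965 paper on the genus of a fibration; no proof is given there, so there is no in-paper argument for your attempt to be compared against. Taken on its own terms, your argument is the classical covering proof and is essentially sound. Partitioning the barycentric labels $\{0,\dots,\dim B\}$ into $k=\lceil (\dim B+1)/(s+1)\rceil$ blocks of size at most $s+1$, taking $W_i$ to be the union of open stars of the $J_i$-labelled vertices of $B'$, retracting $W_i$ by fiberwise straight-line projection onto the subcomplex $B_i$ of dimension at most $s$, and then combining obstruction theory (the successive obstructions live in $H^{j+1}(B_i;\pi_j(F))$, which vanish for $j<s$ by hypothesis and for $j\ge s$ because $\dim B_i\le s$) with the homotopy lifting property to pull a section over $B_i$ back to one over $W_i$ all works, and the arithmetic $\lceil x\rceil<x+1$ converts $\genus(p)\le k$ into the claimed strict inequality. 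You correctly identify the gluing of the per-simplex projections as the point to check, and it does go through: a point of a shared face has zero barycentric coordinate on every vertex outside that face, so its target under the normalized projection is determined by the face alone, and the projected weight on each $J_i$-vertex is non-decreasing along the straight-line homotopy, so the track stays in $W_i$.

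The one step that is thinner than you acknowledge is the reduction from an arbitrary CW complex $B$ to a simplicial one. ``A standard simplicial approximation argument'' does not literally apply, since not every CW complex is triangulable. What is actually needed is that every CW complex of dimension $n$ is homotopy equivalent to a simplicial complex of the same dimension, together with the invariance of $\genus$ under pullback along a homotopy equivalence of base spaces; both are true, but the first is a theorem with content, not a routine approximation. If you want to avoid subdivision entirely, the other classical route through Schwarz's paper is the fiberwise join criterion: $\genus(p)\le k$ if and only if the $k$-fold fiberwise join $J_k(E)\to B$ admits a global section. Its fiber is the $k$-fold join $F^{\ast k}$, which is $(k(s+1)-2)$-connected when $\pi_j(F)=0$ for $j<s$, so ordinary obstruction theory over $B$ itself (no triangulation, no cover) produces a section once $\dim B\le k(s+1)-1$, i.e.\ once $k\ge(\dim B+1)/(s+1)$, yielding the same bound. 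Either route is acceptable; yours just needs the CW-to-simplicial reduction stated precisely rather than waved at.
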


\begin{thm}\label{thm:RelTCUB}
Consider a space $X$ with subspaces $Y_1$ and $Y_2$ which are CW complexes. If $\pi_j(X)=0$ for $j\le s,$ then 
\[
\TC_X(Y_1\times Y_2)<\frac{\dim(Y_1)+\dim(Y_2)+1}{s+1}+1.
\]
\end{thm}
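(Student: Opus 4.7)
The plan is to apply Theorem \ref{thm:homotopybound} directly to the pullback fibration $q$ identified in the paragraph just preceding the theorem. Recall that if $\iota_j \from Y_j \hookrightarrow X$ denotes the inclusion, then $\TC_X(Y_1 \times Y_2) = \genus(q)$, where $q$ is the pullback of the endpoint fibration $p \from P(X) \to X \times X$ along $\iota_1 \times \iota_2 \from Y_1 \times Y_2 \to X \times X$. So the strategy reduces to verifying the two hypotheses of Theorem \ref{thm:homotopybound} for $q$: namely, identifying the base $Y_1 \times Y_2$ as a CW complex of a known dimension, and showing that the fiber of $q$ is suitably connected.

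For the base, since $Y_1$ and $Y_2$ are CW complexes, their product is a CW complex (possibly after retopologizing, but the dimension is unaffected) of dimension $\dim(Y_1) + \dim(Y_2)$. For the fiber, I would use the standard fact that pullbacks preserve fibers, so the fiber of $q$ agrees with the fiber of $p$. The fiber of $p$ over any point $(x,x)$ on the diagonal is the based loop space $\Omega X$, and in general the fiber is homotopy equivalent to $\Omega X$ (since $X$ is path-connected, which follows from $\pi_0(X) = 0$). Using $\pi_j(\Omega X) \cong \pi_{j+1}(X)$, the hypothesis $\pi_j(X) = 0$ for $j \le s$ translates into $\pi_j(\Omega X) = 0$ for $j \le s-1$, i.e., for $j < s$.

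With both hypotheses in hand, Theorem \ref{thm:homotopybound} gives
\[
\TC_X(Y_1 \times Y_2) = \genus(q) < \frac{\dim(Y_1 \times Y_2) + 1}{s + 1} + 1 = \frac{\dim(Y_1) + \dim(Y_2) + 1}{s+1} + 1,
\]
which is exactly the desired inequality.

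The main obstacle is essentially bookkeeping rather than substance: one must be careful to state that $q$ really is a fibration (which is automatic, since fibrations pull back to fibrations) and to identify its fiber correctly with $\Omega X$ up to homotopy so that the connectivity shift $\pi_j(\Omega X) \cong \pi_{j+1}(X)$ can be invoked. Once these points are explicitly noted, the proof is a direct application of Schwarz's bound, and no further computation is needed.
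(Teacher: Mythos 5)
Your proof is correct and follows essentially the same route as the paper: identify $\TC_X(Y_1\times Y_2)=\genus(q)$ for the pullback fibration $q$, note the fiber is $\Omega X$, use the shift $\pi_j(\Omega X)\cong\pi_{j+1}(X)$ to translate the connectivity hypothesis, and apply Theorem \ref{thm:homotopybound} with $\dim(Y_1\times Y_2)=\dim(Y_1)+\dim(Y_2)$. If anything, your index bookkeeping ($\pi_j(\Omega X)=0$ for $j<s$) is stated more cleanly than in the paper's version.
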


\begin{proof}
The fiber of the fibration $p\from P(X) \to X \times X$ defining $\TC(X)$ is $\Omega X$, the loop space of $X$. Since $q\from P_X(Y_1 \times Y_2) \to Y_1 \times Y_2$ is a pullback of $p$, it has the same fiber. Therefore, since $\pi_k(X) \simeq \pi_{k-1}(\Omega X)$, we have by Theorem \ref{thm:homotopybound} that if $\pi_j(X) \simeq \pi_{j-1}(\Omega X) = 0$ for $j < s-1$ (and hence $j \leq s$), then
\[
\TC_X(Y_1\times Y_2)=\genus(q)<\frac{\dim(Y_1 \times Y_2)+1}{s+1}+1 = \frac{\dim(Y_1)+\dim(Y_2)+1}{s+1}+1,
\]
as desired.
\end{proof}

\begin{rmk}
If $Y$ is an $n$-dimensional subspace of a path-connected space $X,$ Theorem \ref{thm:RelTCUB} gives 
\[
\TC_X(Y\times Y)\le 2n+1,
\]
which agrees with the upper bound on $\TC(Y)$ given in Theorem \ref{thm:StandardTC}\ref{item:TCUB} if $Y$ is also path-connected. However, if $X$ is a highly-connected space, the upper bounds in Theorem \ref{thm:RelTCUB} can give better upper bounds on $\TC_X(Y_1\times Y_2)$ than the upper bounds on $\TC(Y).$
\end{rmk}

\begin{thm}{\rm\cite{schwarz_1965}}\label{thm:zerodivisors}
Let $p\from E \to B$ be a fibration. If there exist cohomology classes $\zeta_1, \dots, \zeta_j \in H^*(B)$ for which $p^* \zeta_i = 0$ for each $i$ and $\zeta_1 \smile \dots \smile \zeta_j \neq 0$, then $\genus(p) > j$.
\end{thm}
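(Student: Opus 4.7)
The plan is to prove the contrapositive: assuming $\genus(p)\le j$, I will show that $\zeta_1\smile\cdots\smile\zeta_j=0$ whenever each $p^*\zeta_i=0$. By hypothesis there exist an open cover $B=U_1\cup\cdots\cup U_j$ and continuous sections $s_i\from U_i\to E$ of $p$. The strategy is to lift each $\zeta_i$ to a relative cohomology class $\tilde\zeta_i\in H^*(B,U_i)$ and then invoke the relative cup product, whose target sits over $H^*(B,U_1\cup\cdots\cup U_j)=H^*(B,B)=0$.

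First I would show that $\zeta_i|_{U_i}=0$ for each $i$. Writing $\iota_i\from U_i\hookrightarrow B$ for the inclusion, the section condition $p\circ s_i=\iota_i$ gives
\[
\iota_i^*\zeta_i=s_i^*p^*\zeta_i=s_i^*(0)=0.
\]
The long exact sequence of the pair $(B,U_i)$ then produces a class $\tilde\zeta_i\in H^*(B,U_i)$ whose image under the natural map $j_i^*\from H^*(B,U_i)\to H^*(B)$ equals $\zeta_i$.

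Next I would assemble these relative lifts using the iterated relative cup product
\[
H^*(B,U_1)\otimes\cdots\otimes H^*(B,U_j)\longrightarrow H^*(B,U_1\cup\cdots\cup U_j).
\]
Since the $U_i$ cover $B$, the target is $H^*(B,B)=0$, so $\tilde\zeta_1\smile\cdots\smile\tilde\zeta_j=0$. By the naturality of the cup product with respect to the maps of pairs $(B,\emptyset)\to(B,U_i)$, the image of this product under the natural map $H^*(B,U_1\cup\cdots\cup U_j)\to H^*(B)$ is precisely $\zeta_1\smile\cdots\smile\zeta_j$, which is therefore zero, contradicting the assumption and yielding $\genus(p)>j$.

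The main obstacle is the existence and naturality of the relative cup product for an \emph{open} cover. The singular cup product $H^*(B,A_1)\otimes H^*(B,A_2)\to H^*(B,A_1\cup A_2)$ is well defined for open pairs (this is the standard excision-style argument using small-simplex subdivision), so the construction goes through without additional hypotheses; however, any fully rigorous write-up must handle this point carefully and verify compatibility of the iterated product with the maps $j_i^*$. Once this technical foundation is in place, the argument reduces cleanly to the three-line sequence above.
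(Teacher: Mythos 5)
The paper cites this theorem to Schwarz and does not reproduce a proof, so there is no in-paper argument to compare against. Your proof is correct and is the classical cup-length argument: a section $s_i$ over $U_i$ gives $\iota_i^*\zeta_i = s_i^*p^*\zeta_i = 0$, so $\zeta_i$ lifts to a relative class in $H^*(B,U_i)$; the iterated relative cup product of these lifts lands in $H^*(B, U_1\cup\cdots\cup U_j) = H^*(B,B) = 0$, and by naturality its image in $H^*(B)$ is $\zeta_1\smile\cdots\smile\zeta_j$, forcing this product to vanish. You rightly flag the only genuine technical point, the existence and naturality of the relative cup product $H^*(B,A)\otimes H^*(B,A')\to H^*(B,A\cup A')$ for open $A,A'$, which is supplied by the excision/small-simplex argument in singular cohomology. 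One trivial bookkeeping note: if $\genus(p)=k<j$, pad the cover to $j$ sets by repeating one of them, and observe that each $\zeta_i$ restricts to zero over \emph{any} $U_m$ admitting a section, not only $U_i$, so the lifting step is unaffected. This is precisely Schwarz's original reasoning and the direct generalization of the Lusternik--Schnirelmann cup-length lower bound to sectional category.
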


\begin{thm}\label{thm:RelTCLB}
Let $\mathbf{k}$ be a field and suppose there are classes 
\[
\alpha_1,\dots,\alpha_j\in \ker(\smallsmile \from H^*(X;\mathbf{k})\otimes H^*(X;\mathbf{k})\to H^*(X;\mathbf{k}))
\]
such that $(\iota_1^*\otimes \iota_2^*)(\alpha_1\cdots\alpha_j)\ne0,$ where $\iota_1$ and $\iota_2$ are the inclusions of $Y_1$ and $Y_2$ into $X.$ Then $\TC_X(Y_1\times Y_2)>j.$
\end{thm}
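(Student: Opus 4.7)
The plan is to apply Theorem \ref{thm:zerodivisors} to the fibration $q\from P_X(Y_1\times Y_2)\to Y_1\times Y_2$, which, by the discussion preceding Theorem \ref{thm:homotopybound}, satisfies $\TC_X(Y_1\times Y_2)=\genus(q)$ and is obtained as the pullback of the path fibration $p\from P(X)\to X\times X$ along $\iota_1\times\iota_2\from Y_1\times Y_2\to X\times X$. The natural candidate classes to feed into Theorem \ref{thm:zerodivisors} are $\zeta_i=(\iota_1\times\iota_2)^*(\alpha_i)\in H^*(Y_1\times Y_2;\mathbf{k})$, where we use the Künneth isomorphism (valid since $\mathbf{k}$ is a field) to identify $H^*(X;\mathbf{k})\otimes H^*(X;\mathbf{k})$ with $H^*(X\times X;\mathbf{k})$, and similarly for $Y_1\times Y_2$.

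The first step is to verify $q^*\zeta_i=0$. Denoting the induced map on total spaces by $\tilde{\iota}\from P_X(Y_1\times Y_2)\to P(X)$, commutativity of the pullback square gives $q^*(\iota_1\times\iota_2)^*=\tilde{\iota}^*p^*$, so it suffices to show $p^*\alpha_i=0$. For this I would use that the path fibration $p$ is homotopy equivalent to the diagonal $\Delta\from X\to X\times X$: the constant-path inclusion $c\from X\to P(X)$ is a homotopy equivalence with $p\circ c=\Delta$, so $p^*=\Delta^*$. Under the Künneth identification, $\Delta^*$ is precisely the cup product map $H^*(X;\mathbf{k})\otimes H^*(X;\mathbf{k})\to H^*(X;\mathbf{k})$, so $p^*\alpha_i=0$ by hypothesis.

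The second step is to compute the cup product of the $\zeta_i$. Since $(\iota_1\times\iota_2)^*$ is a ring homomorphism that, under Künneth, coincides with $\iota_1^*\otimes\iota_2^*$, we have
\[
\zeta_1\smile\cdots\smile\zeta_j=(\iota_1\times\iota_2)^*(\alpha_1\cdots\alpha_j)=(\iota_1^*\otimes\iota_2^*)(\alpha_1\cdots\alpha_j),
\]
which is nonzero by hypothesis. Applying Theorem \ref{thm:zerodivisors} with $B=Y_1\times Y_2$ and the classes $\zeta_1,\dots,\zeta_j$ then yields $\TC_X(Y_1\times Y_2)=\genus(q)>j$.

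The only real subtlety, rather than a serious obstacle, is justifying that $p^*$ agrees with the cup product map under the Künneth decomposition; everything else is formal naturality of pullback squares. This observation is exactly what converts Schwarz's genus lower bound into the familiar zero-divisor bound for $\TC(X)$, and the refinement $\iota_1^*\otimes\iota_2^*$ is what accounts for the restriction of the pair of endpoints to $Y_1\times Y_2$.
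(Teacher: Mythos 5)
Your proposal is correct and follows essentially the same route as the paper: pull back Schwarz's zero-divisor bound (Theorem~\ref{thm:zerodivisors}) along $\iota_1\times\iota_2$, identify $\ker p^*$ with $\ker(\smallsmile)$ via the constant-path equivalence $c$ and the factorization $p\circ c=\Delta$, and use that $\iota_1^*\otimes\iota_2^*$ is a ring map to get $\zeta_1\smile\cdots\smile\zeta_j\neq0$. If anything you spell out the nonvanishing of the product $\zeta_1\smile\cdots\smile\zeta_j$ a bit more explicitly than the paper does, which is a small improvement rather than a divergence.
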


\begin{proof}
Let $p\from P(X) \to X \times X$ be the fibration defining $\TC(X)$ and $q$ be the pullback under $\iota_1\times \iota_2,$ which defines $\TC_X(Y_1 \times Y_2)$. Consider the commutative diagrams illustrating these fibrations and the corresponding induced maps in cohomology, taking coefficients in the field $\mathbf{k}$ and identifying $H^*(X\times X)$ (resp. $H^*(Y_1\times Y_2)$) with $H^*(X)\otimes H^*(X)$ (resp. $H^*(Y_1)\otimes H^*(Y_2)$) under the K\"{u}nneth formula isomorphism.
\[
    \begin{tikzcd}
    P_X(Y_1 \times Y_2)\arrow[r,hook,"\iota"]\arrow[d,"q"]&P(X)\arrow[d,"p"]\\
    Y_1 \times Y_2\arrow[r,hook,"\iota_1 \times \iota_2"]&X\times X
    \end{tikzcd}
    \hspace{5mm}
    \begin{tikzcd}
    H^*(P_X(Y_1\times Y_2))&H^*(P(X))\arrow[l,"\iota^*"]\\
    H^*(Y_1) \otimes H^*(Y_2)\arrow[u,"q^*"]&H^*(X)\otimes H^*(X)\arrow[l,"\iota^*_1 \otimes \iota^*_2"]\arrow[u,"p^*"]
    \end{tikzcd}
\]
There exists a homotopy equivalence $c\from X \to P(X)$ sending a point $x$ to the constant path at $x$, so $H^*(P(X))$ is isomorphic to $H^*(X)$. Considering this map $c$ along with the fibration $p$ and the diagonal map $\Delta_X$ (mapping $x \mapsto (x,x)$), we have the following commutative diagrams (again identifying $H^*(X\times X)$ with $H^*(X)\otimes H^*(X)$).
\[
\begin{tikzcd}
X\arrow[r,"c"]\arrow[rd,swap,"\Delta_X"]&P(X)\arrow[d,"p"]\\&X\times X 
\end{tikzcd}
\begin{tikzcd}
H^*(X)&H^*(P(X))\arrow{l}{c^*}[swap]{\approx}\\
&H^*(X)\otimes H^*(X)\arrow{ul}{\Delta_X^*}\arrow[u,"p^*"]
\end{tikzcd}
\]
Since $\Delta_X$ induces the cup product $\smallsmile$, we have 
\[
\ker(\smallsmile\from H^*(X)\otimes H^*(X)\to H^*(X))=\ker(p^*\from H^*(X)\otimes H^*(X)\to H^*(P(X))).
\]

Now, suppose there exist elements $\alpha_1, \dots, \alpha_j \in \ker(p^*)$ such that $(\iota_1^*\otimes \iota_2^*)(\alpha_1\cdots\alpha_j)\ne 0.$ Let $\zeta_i=(\iota_1^*\otimes \iota_2^*)(\alpha_i)\in H^*(Y_1)\otimes H^*(Y_2).$  The commutativity of the initial diagram in cohomology shows $q^*(\zeta_i)=\iota^*(p^*(\alpha_i))=\iota^*(0)=0,$ so $\zeta_i\in \ker(q^*)$ for each $i.$ Thus by Theorem \ref{thm:zerodivisors} and the definition of $\TC_X(Y_1\times Y_2)$, we have that $\TC_X(Y_1 \times Y_2) > j$.
\end{proof}

As an example, consider the torus $T^2$ as the quotient space of $I\times I$ formed by identifying $(0,t)$ with $(1,t)$ and $(s,0)$ with $(s,1),$ and let $Y\subseteq T^2$ denote the subspace corresponding to the boundary of $I\times I$ (so that $Y$ is homeomorphic to a wedge of two circles). Let $\alpha$ and $\beta$ denote the generators of $H^*(T^2;\Z/2\Z),$ let 
\[
\overline{\alpha}=\alpha\otimes1+1\otimes\alpha\in H^*(T^2;\Z/2\Z)\otimes H^*(T^2;\Z/2\Z),
\]
and define $\overline{\beta}$ similarly. We have 
\[
\overline{\alpha},\overline{\beta}\in \ker(\smallsmile\from H^*(T^2;\Z/2\Z)\otimes H^*(T^2;\Z/2\Z)\to H^*(T^2;\Z/2\Z)),
\]
and if $A=\iota^*(\alpha)$ and $B=\iota^*(\beta)$ (where $\iota\from Y\hookrightarrow T^2$ is the inclusion), we have 
\[
(\iota^*\otimes\iota^*)(\overline{\alpha}\overline{\beta})=AB\otimes 1+A\otimes B+B\otimes A+1\otimes AB=A\otimes B+B\otimes A\ne0.
\]
So, Theorem \ref{thm:RelTCLB}, together with the well-known facts that $\TC(T^2)=\TC(S^1\vee S^1)=3$ and the upper bounds discussed at the beginning of this section give
\[
\TC_{T^2}(Y\times Y)=\TC(Y)=\TC(T^2)=3.
\]

Finally, we briefly mention the relationship between the  topological complexity of $X$ and the Lusternik-Schnirelmann category of $X$. Given a space $X,$ recall the Lusternik-Schnirelmann category is denoted by $\cat(X)$ and can be defined as the smallest integer $k$ such that $X$ can be covered by $k$ open sets $W_1,\dots,W_k$ such that each inclusion $W_i\hookrightarrow X$ is nullhomotopic. If no such $k$ exists, set $\cat(X)=\infty.$ We have the following relationships between the topological complexity of $X,$ the relative topological complexity of a pair $(X,Y)$ and the Lusternik-Schnirelmann categories of $X$ and $X\times X$ \cite{farber_2003,short_2018}:

\[
\cat(X)\le\TC(X,Y)=\TC_X(X\times Y)\le\TC(X)\le\cat(X \times X).
\]
The inequality $\TC_X(Y_1\times Y_2)\le\min\{\TC(X),\TC(Y_1\cup Y_2)\}$ shows that we have 
\[
\TC_X(Y_1\times Y_2)\le\min\{\cat(X\times X),\cat((Y_1\cup Y_2)\times(Y_1\cup Y_2)\}.
\]
However, $\TC_X(Y_1\times Y_2)$ need not be bounded below by $\cat(X)$ or $\cat(Y_1\cup Y_2).$ For example, if $Y_1=Y_2=Y$ is a contractible subspace of a non-contractible space $X,$ we have 
\[
\TC_X(Y\times Y)\le\TC(Y)=1<\cat(X),
\]
and on the other hand, if $Y_1=Y_2=Y$ is a non-contractible subspace of a contractible space $X,$ we have 
\[
\TC_X(Y\times Y)\le\TC(X)=1<\cat(Y).
\]
Instead, we can work with a relative version of Lusternik-Schnirelmann category for a subspace $Y\subseteq X,$ which we denote by $\cat_X(Y),$ and is defined in the same way as $\cat(Y),$ with the exception that we only require that each inclusion $W_i\hookrightarrow X$ (rather than $W_i\hookrightarrow Y$) be nullhomotopic. With this, one easily shows that we have 
\[
\max\{\cat_X(Y_1),\cat_X(Y_2)\}\le\TC_X(Y_1\times Y_2)\le\cat_{X\times X}(Y_1\times Y_2).
\]

\section{Configuration Spaces}\label{sec:ConfigSpaces}

We now turn our attention to configuration spaces. For any space $Y,$ recall the configuration space of $n$ points in $Y$ is the subspace of $Y^n$ consisting of $n$-tuples of distinct points. In other words, 
\[
C^n(Y)=\{(y_1,\dots,y_n)\in Y^n|y_i\ne y_j\text{ for }i\ne j\}.
\]
As mentioned in Section \ref{sec:Intro}, we will view $Y$ as a subspace of some larger space $Y'$ and consider $\TC_{C^n(Y')}(C^n(Y)\times C^n(Y)).$ Viewing elements of the configuration spaces as configurations of $n$ robots, we can interpret the space $Y$ as the space of locations at which the robots are required to perform tasks, and the space $Y'$ as the space throughout which the robots may move. As an elementary example, if $Y$ is a discrete subspace of a space $Y'$ such that $Y$ has at least $n$ points and $C^n(Y')$ is path-connected, then the inclusion $C^n(Y)\hookrightarrow C^n(Y')$ is nullhomotopic, so Theorem \ref{thm:TC1} shows $\TC_{C^n(Y')}(C^n(Y)\times C^n(Y))=1$ (or simply note that any function $C^n(Y)\times C^n(Y)\to P_{C^n(Y')}(C^n(Y)\times C^n(Y))$ is continuous, and such a function exists since $C^n(Y')$ is path-connected).

We also note that given any space $Y$ with at least $n$ points, we can embed $Y$ in a larger space $Y'$ such that $\TC_{C^n(Y')}(C^n(Y)\times C^n(Y))=1.$ Indeed, given a space $Y$ with at least $n$ points, let $Z$ be a discrete space with $n$ points $z_1,\dots,z_n$ and let $Y'=Y\ast Z.$ The space $Y'$ can also be interpreted as $n$ copies of the cone $CY$ identified along $Y$. Again viewing $Y$ as the subspace of $Y\ast Z$ which consists of all points of the form $[y,z_i,0],$ define $H_t\from C^n(Y)\to C^n(Y')$ by 
$H_t(y_1,\dots,y_n)=([y_1,z_1,t],\dots,[y_n,z_n,t]).$ This gives a homotopy from the inclusion $C^n(Y)\hookrightarrow C^n(Y')$ to the constant map to the point $([y_1,z_1,1],\dots,[y_n,z_n,1])$ in $C^n(Y'),$ so Theorem \ref{thm:TC1} shows $\TC_{C^n(Y')}(C^n(Y)\times C^n(Y))=1.$

Our primary interest is the case in which we view $Y$ as the subspace $Y\times \{0\}\subseteq Y\times I.$ For the remainder of this section, $Y$ is any space with at least $n$ points, and we denote the space $C^n(Y\times I)$ by $X$.

\begin{lemma}\label{lemma:ConfigVsProduct}
We have $\TC_{X}(C^n(Y)\times C^n(Y))=\TC_{Y^n}(C^n(Y)\times C^n(Y)).$
\end{lemma}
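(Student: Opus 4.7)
The plan is to apply the homotopy invariance result, Theorem~\ref{thm:HtpyInv}, to the pair of ambient spaces $X = C^n(Y \times I)$ and $X' = Y^n$, each containing the common subspace $Y_1 = Y_2 = C^n(Y)$; in the first case $C^n(Y)$ sits as the height-zero slice $C^n(Y) \times \{(0,\dots,0)\}$, and in the second it is the standard subspace of $Y^n$.

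To set up the maps required by the theorem, I would let $f \from X \to Y^n$ be the projection forgetting the $I$-coordinate, $f((y_1,t_1),\dots,(y_n,t_n)) = (y_1,\dots,y_n)$, and define $f' \from Y^n \to X$ by fixing any $n$ distinct values $t_1 < t_2 < \cdots < t_n$ in $I$ and setting $f'(y_1,\dots,y_n) = ((y_1,t_1),\dots,(y_n,t_n))$. The distinctness of the heights guarantees $f'$ lands in $C^n(Y \times I)$ even if some $y_i = y_j$. I would then take $\alpha_j = \alpha'_j = \mathrm{id}_{C^n(Y)}$ for $j = 1,2$.

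Next I would verify the two diagrams of Theorem~\ref{thm:HtpyInv} commute up to homotopy. The first commutes strictly: embedding $C^n(Y)$ into $X$ at height $0$ and then projecting back to $Y^n$ recovers the standard inclusion $C^n(Y) \hookrightarrow Y^n$. For the second, $\iota_j$ sends $(y_1,\dots,y_n)$ to $((y_1,0),\dots,(y_n,0))$, whereas $f' \circ \iota'_j$ sends it to $((y_1,t_1),\dots,(y_n,t_n))$; the linear interpolation $H_s(y_1,\dots,y_n) = ((y_1, s t_1),\dots,(y_n, s t_n))$ is the natural homotopy between them.

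The main point requiring care, and really the only substantive verification, is confirming that $H_s$ stays inside $C^n(Y \times I)$ for every $s \in [0,1]$. When $s > 0$ this is automatic because the heights $s t_i$ are all distinct, so the points $(y_i, s t_i)$ are distinct regardless of possible collisions among the $y_i$; at $s = 0$, all heights collapse to zero and one must use that the source is $C^n(Y)$, so the $y_i$ themselves are distinct. This is precisely why the construction is carried out on $C^n(Y)$ and would fail if $H_s$ were defined on all of $Y^n$. Once this check is made, Theorem~\ref{thm:HtpyInv} yields the desired equality $\TC_X(C^n(Y) \times C^n(Y)) = \TC_{Y^n}(C^n(Y) \times C^n(Y))$.
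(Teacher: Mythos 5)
Your proposal is correct and is a cleaner packaging of essentially the same mechanism the paper uses. The paper proves the two inequalities by hand: for $\TC_X \le \TC_{Y^n}$, it constructs sections of $P_X(C^n(Y)\times C^n(Y))$ that lift robot $j$ to height $\frac{1}{j}$, run the $Y^n$-section at these separated heights, and lower back down; for the reverse, it projects $X$-paths to $Y^n$-paths by forgetting the $I$-coordinate. You observe that precisely these two maps --- ``project to $Y^n$'' and ``embed at distinct fixed heights'' --- satisfy the hypotheses of Theorem~\ref{thm:HtpyInv} with $\alpha_j=\alpha'_j=\mathrm{id}$, so that theorem's proof already supplies the triple-concatenation section construction, and the whole argument reduces to checking two diagrams. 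You check them correctly and rightly isolate the one non-trivial point: the straight-line homotopy $H_s$ stays in $C^n(Y\times I)$ because the heights $st_i$ separate the points when $s>0$, while distinctness of the $y_i$ (the domain being $C^n(Y)$, not $Y^n$) handles $s=0$. The paper's route is more explicit and self-contained; yours is shorter and puts the general machinery of Section~\ref{sec:GeneralResults} to work, which is arguably the point of having developed it.
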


\begin{proof}
First, suppose $\TC_{Y^n}(C^n(Y)\times C^n(Y))=k,$ and let $U_1,\dots,U_k$ form an open cover of $C^n(Y)\times C^n(Y)$ which admits sections $s_i\from U_i\to P_{Y^n}(C^n(Y)\times C^n(Y)).$ We wish to define a section $\tilde{s}_i\from U_i\to P_{X}(C^n(Y)\times C^n(Y))$ for $i=1,\dots,k.$

For each $\vec{y}=(y_1,\dots,y_n)\in C^n(Y),$ let $l_{\vec{y}}=(l_1,\dots,l_n)\from I\to X$ be the path given by $l_j(t)=(y_j,\frac{t}{j}).$ Let $f\from Y^n\to X$ be given by $f(y_1,\dots,y_n)=((y_1,1),\dots,(y_n,\frac{1}{n})).$ Finally, define $\tilde{s}_i\from U_i\to P_X(C^n(Y)\times C^n(Y))$ by 
\[
\tilde{s}_i(\vec{x},\vec{y})(t)=
\begin{cases}
l_{\vec{x}}(3t),&0\le t\le \frac{1}{3}\\
f(s_i(\vec{x},\vec{y})(3t-1)),&\frac{1}{3}\le t\le\frac{2}{3}\\
l_{\vec{y}}(3-3t),&\frac{2}{3}\le t\le1
\end{cases}.
\]
In other words, $\tilde{s}_i$ lifts robot $j$ from ``height" 0 to height $\frac{1}{j},$ then follows the path given by $s_i,$ (at height $\frac{1}{j}$), then moves robot $j$ back down to height 0. Each $\tilde{s}_i$ is continuous, showing $\TC_X(C^n(Y)\times C^n(Y))\le \TC_{Y^n}(C^n(Y)\times C^n(Y)).$

Conversely, suppose $\TC_X(C^n(Y)\times C^n(Y))=k,$ so that we may find an open cover $U_1,\dots, U_k$ of $C^n(Y)\times C^n(Y)$ and sections $\tilde{s}_i\from U_i\to P_X(C^n(Y)\times C^n(Y)).$ With this, we define $s_i\from U_i\to P_{Y^n}(C^n(Y)\times C^n(Y))$ by 
\[
s_i(\vec{x},\vec{y})(t)= g(\tilde{s}_i(\vec{x},\vec{y})(t)),
\]
where $g\from X\to Y^n$ is given by $((y_1,t_1),\dots,(y_n,t_n))\mapsto (y_1,\dots,y_n).$ Each $s_i$ is continuous, showing $\TC_{Y^n}(C^n(Y)\times C^n(Y))\le \TC_X(C^n(Y)\times C^n(Y)),$ completing the proof.
\end{proof}

\begin{thm}\label{thm:ConfigUB}
We have $\TC_X(C^n(Y)\times C^n(Y))\le \TC(Y^n).$
\end{thm}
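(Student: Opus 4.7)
The plan is to reduce the statement to a direct application of the subspace inequality (\ref{eqn:SubspaceUB}) by first passing through Lemma \ref{lemma:ConfigVsProduct}. The key observation is that while $X = C^n(Y \times I)$ is not obviously related to $Y^n$, the lemma just proved tells us that the relative topological complexity is unchanged if we replace the ambient space $X$ with $Y^n$. This is the crucial step that makes the estimate feasible, since $C^n(Y)$ sits inside $Y^n$ as an open subspace in a very transparent way, whereas its embedding into $C^n(Y \times I)$ is more subtle.

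First, I would write $\TC_X(C^n(Y)\times C^n(Y)) = \TC_{Y^n}(C^n(Y)\times C^n(Y))$ by Lemma \ref{lemma:ConfigVsProduct}. Then I would apply the general subspace upper bound (\ref{eqn:SubspaceUB}) with ambient space $Y^n$, taking $Y_1 = Y_2 = C^n(Y)$ and $Y_1' = Y_2' = Y^n$. This yields
\[
\TC_{Y^n}(C^n(Y)\times C^n(Y)) \le \TC_{Y^n}(Y^n \times Y^n) = \TC(Y^n),
\]
where the final equality is just the definition of $\TC(Y^n)$ as $\TC_{Y^n}(Y^n \times Y^n)$. Chaining these gives the desired bound.

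Since all the technical work has already been absorbed into Lemma \ref{lemma:ConfigVsProduct} (which constructs the lift through the interval coordinate that avoids collisions by assigning each robot a distinct height $1/j$), there is no additional obstacle here. In particular, given a motion planner $\{(U_i, s_i)\}$ on $Y^n \times Y^n$ with $k = \TC(Y^n)$ domains, one restricts the open cover to $C^n(Y) \times C^n(Y)$ to get an open cover of the same cardinality, and the restricted sections automatically land in $P_{Y^n}(C^n(Y) \times C^n(Y))$ since they still produce paths in $Y^n$ whose endpoints lie in $C^n(Y)$. The subtlety that the intermediate points of these paths might leave $C^n(Y)$ (i.e., have collisions) is exactly what the relative version is designed to allow, and it is what Lemma \ref{lemma:ConfigVsProduct} uses the interval coordinate to repair.
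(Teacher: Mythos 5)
Your proof is correct and follows exactly the paper's argument: apply Lemma \ref{lemma:ConfigVsProduct} to replace the ambient space $X = C^n(Y \times I)$ with $Y^n$, then invoke the subspace upper bound \eqref{eqn:SubspaceUB} with $Y_1' = Y_2' = Y^n$. The additional discussion of why the lemma is the crucial step is accurate but not needed for the proof itself.
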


\begin{proof}
From Lemma \ref{lemma:ConfigVsProduct}, we have $\TC_{X}(C^n(Y)\times C^n(Y))=\TC_{Y^n}(C^n(Y)\times C^n(Y)),$ and from the inequality given in \eqref{eqn:SubspaceUB}, we have 
\[
\TC_{Y^n}(C^n(Y)\times C^n(Y))\le\TC_{Y^n}(Y^n\times Y^n)=\TC(Y^n).
\]
\end{proof}
\begin{rmk} 
For the case in which $Y$ is a Euclidean neighborhood retract, Farber shows that we have $\TC(Y^n)\le n\cdot\TC(Y)-n+1$ \cite{farber_2008}, so in this case, Theorem \ref{thm:RelTCUB} gives upper bounds on $\TC_X(C^n(Y)\times C^n(Y))$ in terms of $\TC(Y)$ (and $n$).
\end{rmk}

\begin{cor}
If $Y$ is contractible, we have $\TC_X(C^n(Y)\times C^n(Y))=1.$
\end{cor}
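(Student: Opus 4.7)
The plan is to invoke Theorem \ref{thm:ConfigUB} directly and combine it with Theorem \ref{thm:StandardTC}\ref{item:TC1}. Specifically, if $Y$ is contractible, then the $n$-fold product $Y^n$ is also contractible (contractibility is preserved under finite products, since one can contract each factor independently). By Theorem \ref{thm:StandardTC}\ref{item:TC1}, this gives $\TC(Y^n) = 1$.

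Applying Theorem \ref{thm:ConfigUB}, we then have
\[
\TC_X(C^n(Y) \times C^n(Y)) \le \TC(Y^n) = 1.
\]
Since $Y$ is assumed to contain at least $n$ points, $C^n(Y)$ is nonempty, so $C^n(Y) \times C^n(Y)$ is nonempty and requires at least one open set in any cover. Hence $\TC_X(C^n(Y) \times C^n(Y)) \ge 1$, and equality follows.

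There is no real obstacle here; the corollary is essentially a one-line consequence of the previously established upper bound together with the known behavior of $\TC$ on contractible spaces. The only minor subtlety worth noting is confirming that $C^n(Y)$ is nonempty so that the lower bound of $1$ applies, which is guaranteed by the standing hypothesis that $Y$ has at least $n$ points.
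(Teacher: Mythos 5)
Your proof is correct and follows essentially the same route as the paper: contractibility of $Y$ gives contractibility of $Y^n$, hence $\TC(Y^n)=1$ by Theorem~\ref{thm:StandardTC}\ref{item:TC1}, and then Theorem~\ref{thm:ConfigUB} gives the upper bound. The extra remark about nonemptiness of $C^n(Y)$ to get the lower bound of $1$ is a minor point the paper leaves implicit, but it matches the paper's standing hypothesis that $Y$ has at least $n$ points.
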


\begin{proof}
This follows immediately from Theorem \ref{thm:ConfigUB} and the fact that $Y^n$ is contractible whenever $Y$ is contractible, so $\TC(Y^n)=1$ by Theorem \ref{thm:StandardTC}\ref{item:TC1}.
\end{proof}

It is worth comparing this with results regarding $\TC(C^n(Y))$, which is rarely trivial even when $Y$ is contractible. For example, in the case of Euclidean space $\R^m,$ for $n\ge 2$ one has $\TC(C^n(\R^m))=2n-\epsilon$ where $\epsilon=1$ for $m\ge 3$ odd and $\epsilon=2$ for $m\ge 2$ even \cite{farber_yuzvinsky_2004,farber_grant_2008}.  In the case of a tree $T$ (i.e. a simply connected 1-dimensional CW complex) which is homeomorphic to neither $S^1$ nor $I$, one has $\TC(C^n(T))=2\min\{m,\lfloor n/2\rfloor\}+1,$ where $m$ is the number of vertices of degree greater than 2 in $T.$ \cite{farber_2017,lutgehetmann_recio-mitter_2019}.

Now, we use Theorem \ref{thm:RelTCLB} to show the upper bound given in Theorem \ref{thm:ConfigUB} is sharp for the case in which $Y$ is a connected graph (i.e. a connected 1-dimensional CW complex) with at least $n$ disjoint cycles. Recall a cycle in a graph $Y$ is a subset $\mathcal{C}$ which is homeomorphic to $S^1.$ In the proof, we take coefficients in $\Z/2\Z$ and omit the coefficients from our notation.

\begin{prop}
Let $Y$ be a connected graph with at least $n$ cycles $\mathcal{C}_1,\dots,\mathcal{C}_n$ satisfying $\mathcal{C}_i\cap \mathcal{C}_j=\emptyset$ for $i\ne j.$ Then, 
\[
\TC_X(C^n(Y)\times C^n(Y))= \TC(C^n(Y))=\TC(Y^n)=2n+1.
\]
\end{prop}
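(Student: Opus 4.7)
The plan is to prove all three equalities by establishing the common upper bound $\leq 2n+1$ for each of the three quantities and then a single lower bound $\TC_X(C^n(Y)\times C^n(Y))\geq 2n+1$, from which the remaining equalities follow. Since $Y$ is $1$-dimensional, $Y^n$ is an $n$-dimensional path-connected CW complex and $C^n(Y)$ is an open subspace of $Y^n$ of dimension at most $n$, so Theorem~\ref{thm:StandardTC}\ref{item:TCUB} yields $\TC(Y^n),\TC(C^n(Y))\leq 2n+1$, and Theorem~\ref{thm:ConfigUB} gives $\TC_X(C^n(Y)\times C^n(Y))\leq\TC(Y^n)\leq 2n+1$. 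Once the lower bound is in hand, the inequality $\TC_X(C^n(Y)\times C^n(Y))\leq\TC(C^n(Y))$, obtained from \eqref{eqn:SubspaceUB} by shrinking the ambient space from $X$ to $C^n(Y)$, forces $\TC(C^n(Y))=2n+1$, and the inequality $\TC_X(C^n(Y)\times C^n(Y))\leq\TC(Y^n)$ forces $\TC(Y^n)=2n+1$.

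For the lower bound, I will first apply Lemma~\ref{lemma:ConfigVsProduct} to replace $X$ with $Y^n$ and then invoke Theorem~\ref{thm:RelTCLB} with $\mathbf{k}=\mathbb{Z}/2\mathbb{Z}$ (coefficients suppressed throughout). Because the cycles $\mathcal{C}_i$ are pairwise disjoint, their fundamental classes are linearly independent in $H_1(Y)$, so I can choose $a_1,\dots,a_n\in H^1(Y)$ with $a_i|_{\mathcal{C}_j}$ equal to the generator of $H^1(\mathcal{C}_j)$ when $i=j$ and equal to zero otherwise. Let $\pi_j\colon Y^n\to Y$ denote the $j$-th projection, set $A_{i,j}=\pi_j^* a_i$, and form the zero-divisors $\bar{A}_{i,j}=A_{i,j}\otimes 1+1\otimes A_{i,j}\in\ker(\smallsmile)$. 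Fix a fixed-point-free permutation $\pi$ of $\{1,\dots,n\}$ and let $P$ be the product of the $2n$ classes $\bar{A}_{j,j}\bar{A}_{\pi(j),j}$ for $j=1,\dots,n$. Since $H^*(Y)$ is $1$-dimensional, one has $a_i a_{i'}=0$, so each such pair expands as $\bar{A}_{j,j}\bar{A}_{\pi(j),j}=A_{j,j}\otimes A_{\pi(j),j}+A_{\pi(j),j}\otimes A_{j,j}$.

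The remaining step is to verify $(\iota^*\otimes\iota^*)(P)\neq 0$, where $\iota\colon C^n(Y)\hookrightarrow Y^n$ is the inclusion. For this, I introduce detection maps $\phi_\sigma\colon T^n\to C^n(Y)$ for permutations $\sigma$ of $\{1,\dots,n\}$, defined by $\phi_\sigma(t_1,\dots,t_n)=(c_{\sigma(1)}(t_1),\dots,c_{\sigma(n)}(t_n))$, where $c_i\colon S^1\to\mathcal{C}_i$ is a chosen homeomorphism; these maps are well-defined because the cycles are disjoint. A short computation using the factorization $\pi_j\circ\iota\circ\phi_\sigma=c_{\sigma(j)}\circ\mathrm{pr}_j$ and the defining property of the $a_i$ gives $\phi_\sigma^*\iota^* A_{i,j}=T_j$ when $i=\sigma(j)$ and $0$ otherwise, where $T_j\in H^1(T^n)$ is the $j$-th standard generator. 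Applying $(\phi_{\mathrm{id}}^*\otimes\phi_\pi^*)\circ(\iota^*\otimes\iota^*)$ to $\bar{A}_{j,j}\bar{A}_{\pi(j),j}$ annihilates the second summand (because $\pi(j)\neq j$) and sends the first to $T_j\otimes T_j$; the product over $j$ is therefore $(T_1\cdots T_n)\otimes(T_1\cdots T_n)\neq 0$ in $H^n(T^n)\otimes H^n(T^n)$, so $(\iota^*\otimes\iota^*)(P)\neq 0$. Theorem~\ref{thm:RelTCLB} then supplies $\TC_{Y^n}(C^n(Y)\times C^n(Y))>2n$.

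The main obstacle is the detection step. A diagonal pullback through a single map $\phi_\sigma\times\phi_\sigma$ annihilates the full product, since $\phi_\sigma^*$ kills every $\bar{A}_{i,j}$ with $i\neq\sigma(j)$. The fix is to couple two distinct test maps $\phi_{\mathrm{id}}$ and $\phi_\pi$ via a fixed-point-free permutation $\pi$, so that in each tensor slot exactly one summand of $\bar{A}_{j,j}\bar{A}_{\pi(j),j}$ survives and the two pullbacks combine to produce the nonzero top class on $T^n\times T^n$. This also makes clear that the argument requires $n\geq 2$ (so that a fixed-point-free $\pi$ exists) and uses essentially all $n$ of the disjoint cycles, one per factor of $Y^n$.
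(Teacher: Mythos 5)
Your proof is correct and follows the same core strategy as the paper (apply Theorem~\ref{thm:RelTCLB} to zero-divisors in $H^*(Y^n;\Z/2\Z)\otimes H^*(Y^n;\Z/2\Z)$, then close the sandwich with $\TC(Y^n)\le 2n+1$), but it differs in one substantial way: the paper simply cites \cite{scheirer_2020} both for the equality $\TC(C^n(Y))=2n+1$ and for the nonvanishing of $(\iota^*\otimes\iota^*)(\alpha_1\cdots\alpha_n\alpha_1'\cdots\alpha_n')$, whereas you give a self-contained verification of the nonvanishing via the detection maps $\phi_\sigma\colon T^n\to C^n(Y)$ and the asymmetric pullback $\phi_{\mathrm{id}}^*\otimes\phi_\pi^*$. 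That detection computation is correct (the fixed-point-free $\pi$ kills exactly one summand of each pair $\bar A_{j,j}\bar A_{\pi(j),j}$, leaving the top class of $T^n\times T^n$) and is essentially a reproof of the relevant ingredient of \cite{scheirer_2020}; your classes with $\pi(j)=j+1\bmod n$ are precisely the paper's $\alpha_j,\alpha_j'$. Two small caveats. First, the inequality $\TC_X(C^n(Y)\times C^n(Y))\le\TC(C^n(Y))$ is not an instance of \eqref{eqn:SubspaceUB}; it is the unnumbered inequality $\TC_X(Y\times Y)\le\TC(Y)$ stated at the start of Section~\ref{sec:GeneralResults} (shrinking the ambient space, not enlarging the target). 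Second, invoking Theorem~\ref{thm:StandardTC}\ref{item:TCUB} for $\TC(C^n(Y))\le 2n+1$ requires that $C^n(Y)$ have the homotopy type of a path-connected CW complex of dimension at most $n$; being an open subspace of $Y^n$ does not deliver this for free, and it is exactly the sort of fact (Abrams' discretization for graph configuration spaces) that the cited reference supplies. Since you are proving $\TC(C^n(Y))=2n+1$ rather than citing it, this point deserves an explicit reference. Finally, your observation that the argument needs $n\ge 2$ to have a fixed-point-free permutation is a real constraint, but the paper's own product $\alpha_1\alpha_1'$ degenerates to $\alpha_1^2=0$ over $\Z/2\Z$ when $n=1$ as well, so this is not a defect peculiar to your version.
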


\begin{proof}
In \cite{scheirer_2020}, it is shown that under these hypotheses, we have $\TC(C^n(Y))=2n+1.$ The proof involves the construction of classes $\mu_j$ and $\mu_j'$ in $H^*(C^n(Y))$ for $j=1,\dots,n$ which we recall here. For each $j=1,\dots,n,$ let $b_j\in H_*(Y)$ denote a homology class corresponding to the cycle $\mathcal{C}_j,$ and let $\beta_j\in H^*(Y)$ denote the dual class. Then, define $\gamma_j,\gamma'_j\in H^*(Y^n)$ by
\[
\gamma_j=1\times\cdots\times \beta_j\times\cdots\times1,
\]
where the only non-trivial term, $\beta_j$, falls in the $j$th factor, and 
\[
\gamma'_j=1\times\cdots\times \beta_{j+1}\times\cdots\times1,
\]
where the only non-trivial term, $\beta_{j+1}$, again falls in the $j$th factor (subscripts to be read modulo $n$). Let $\iota\from C^n(Y)\hookrightarrow Y^n$ denote the inclusion, and let $\mu_j=\iota^*(\gamma_j)$ and $\mu_j'=\iota^*(\gamma_j').$
It is shown in \cite{scheirer_2020} that the product 
\[
\biggl(\prod_{j=1}^n(\mu_j\otimes1+1\otimes\mu_j)\biggr)\cdot\biggl(\prod_{j=1}^n(\mu'_j\otimes1+1\otimes\mu'_j)\biggr)
\]
is nonzero. However, this product is the image under $\iota^*\otimes \iota^*$ of $\alpha_1\cdots\alpha_n\alpha_1'\cdots\alpha_n',$ where $\alpha_j=\gamma_j\otimes1+1\otimes\gamma_j,$ and $\alpha_j'=\gamma_j'\otimes1+1\otimes\gamma_j'.$ We have 
\[\alpha_j,\alpha_j'\in\ker(\smallsmile\from H^*(Y^n)\otimes H^*(Y^n)\to H^*(Y^n)),
\]
and since $(\iota^*\otimes \iota^*)(\alpha_1\cdots\alpha_n\alpha'_1\cdots\alpha'_n)\ne0,$ Theorem \ref{thm:RelTCLB} shows 
\[
\TC_{Y^n}(C^n(Y)\times C^n(Y))\ge2n+1.
\]
Since $Y^n$ is an $n$-dimensional complex, we have $\TC(Y^n)\le 2n+1,$ giving
\[
2n+1\le\TC_{Y^n}(C^n(Y)\times C^n(Y))\le\TC(Y^n)\le 2n+1,
\]
so $\TC_X(C^n(Y)\times C^n(Y))=\TC_{Y^n}(C^n(Y)\times C^n(Y))=\TC(Y^n)=2n+1,$ completing the proof.
\end{proof}

Next, we show that the upper bound in Theorem \ref{thm:ConfigUB} is not sharp in general. Recall that if $m\ge 2$ is even, we have $\TC((S^m)^n)=2n+1$ \cite{farber_2003}.

\begin{prop}
If $m\ge 2$ is an even integer and $X=C^n(S^m\times I)$, we have 
\[
\TC_X(C^n(S^m)\times C^n(S^m))\le n+2.
\]
\end{prop}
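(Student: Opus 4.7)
The plan is to apply Lemma~\ref{lemma:ConfigVsProduct} to reduce the statement to showing $\TC_{(S^m)^n}(C^n(S^m)\times C^n(S^m))\le n+2$, and then construct a partition of $C^n(S^m)\times C^n(S^m)$ into $n+2$ pieces each admitting a continuous section; since $(S^m)^n$ is an ENR, the partition version suffices by the remark after Definition~\ref{defn:RelativeTC}.

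First, I would fix a point $p\in S^m$ and, using that $S^m\setminus\{p\}\cong\R^m$, a nowhere-vanishing tangent vector field $\xi$ on $S^m\setminus\{p\}$. For each pair $(x,y)\in S^m\times S^m$, I assign a weight $w(x,y)\in\{0,1,2\}$: $w(x,y)=0$ when $y\ne-x$; $w(x,y)=1$ when $y=-x$ and $x\ne p$; and $w(x,y)=2$ when $(x,y)=(p,-p)$. Each value of $w$ comes with a canonical continuous path from $x$ to $y$ in $S^m$: the unique shortest geodesic for $w=0$, the antipodal great semicircle tangent to $\xi(x)$ for $w=1$, and a once-and-for-all fixed path from $p$ to $-p$ for $w=2$. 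Note that $w$ is upper semicontinuous, since both $\{w\ge 1\}=\{y=-x\}$ and $\{w\ge 2\}=\{(p,-p)\}$ are closed in $S^m\times S^m$.

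Next, I would set $W(\vec x,\vec y)=\sum_{i=1}^n w(x_i,y_i)$ for $(\vec x,\vec y)\in C^n(S^m)\times C^n(S^m)$. Because $\vec x$ is a configuration, at most one coordinate $x_i$ can equal $p$, and hence at most one $w(x_i,y_i)$ can equal $2$; this forces $W\le 2+(n-1)=n+1$. Thus the level sets $U_l=W^{-1}(l)$ for $l=0,1,\ldots,n+1$ partition $C^n(S^m)\times C^n(S^m)$ into $n+2$ (possibly empty) pieces. On each $U_l$ I plan to define a section coordinatewise: robot $i$ follows the path associated with $w(x_i,y_i)$.

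The main obstacle will be verifying that this coordinatewise section is continuous on $U_l$, since different points of $U_l$ can in principle realize different weight patterns $(w(x_1,y_1),\ldots,w(x_n,y_n))$. The key point is that upper semicontinuity of each $w(x_i,y_i)$ implies that in a neighborhood of any $(\vec x^{(0)},\vec y^{(0)})\in U_l$ we have $w(x_i,y_i)\le w(x_i^{(0)},y_i^{(0)})$ for each $i$; since the sum is fixed at $l$, coordinatewise equality must hold, so the weight pattern is locally constant on $U_l$. On each pattern-constant (hence open-closed) piece of $U_l$, the coordinate rules are continuous on the appropriate locally closed strata of $S^m\times S^m$, so the assembled section is continuous on $U_l$. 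Combined with Lemma~\ref{lemma:ConfigVsProduct}, this yields the asserted bound.
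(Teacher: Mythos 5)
Your argument is correct and follows essentially the same route as the paper's proof: both reduce via Lemma~\ref{lemma:ConfigVsProduct} to the product $(S^m)^n$, use the same three-piece decomposition of $S^m\times S^m$ (your weights $0,1,2$ are the paper's $K_1,K_2,K_3$ shifted by one), sum the piece-index over coordinates, and exploit the fact that at most one $x_i$ can equal the marked point to cut the number of nonempty level sets from $2n+1$ to $n+2$. Your upper-semicontinuity phrasing is a clean repackaging of the paper's closure argument ($\overline{L}_J\cap L_{J'}=\emptyset$ when $|J'|=|J|$, $J'\ne J$), but the underlying idea is identical.
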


\begin{proof}
Our proof is a modification of Farber's proof that $\TC((S^m)^n)=2n+1$ in \cite{farber_2003}. According to Lemma \ref{lemma:ConfigVsProduct}, it suffices to show $\TC_{(S^m)^n}(C^n(S^m)\times C^n(S^m))\le n+2.$ We show that we can partition $C^n(S^m)\times C^n(S^m)$ into $n+2$ sets, each of which admits a continuous section of the fibration $P_{(S^m)^n}(C^n(S^m)\times C^n(S^m))\to C^n(S^m)\times C^n(S^m)$ (see the remark following Definition \ref{defn:RelativeTC}).

Fix some $z\in S^m$ and let $V\from S^m-\{z\}\to S^m$ be a unit tangent vector field (i.e. $|V(x)|=1$ and $x\cdot V(x)=0$ for each $x\in S^m-\{z\}$).

Let $K_1=\{(x,y)|x\ne-y\}\subseteq S^m\times S^m,$ and define $s_1\from U_1\to P(S^m)$ by letting $s_1(x,y)$ be the path in $ S^m$ which travels along the shortest path from $x$ to $y$ at constant speed.

Let $K_2=\{(x,-x)|x\ne z\}\subseteq S^m\times S^m,$ and define $s_2\from U_2\to P(S^m)$ by letting $s_2(x,-x)$ be the path in $S^m$ given by $s_2(x,-x)(t)= x\cdot\cos(\pi t)+V(x)\cdot\sin(\pi t).$ 

Finally, let $K_3=\{(z,-z)\}\subseteq S^m\times S^m,$ and let $s_3\from K_3\to P(S^m)$ map $(z,-z)$ to any fixed path from $-z$ to $z.$ Note the sets $K_i$ and sections $s_i$ show $\TC(S^m)\le 3$ (which is easily shown to be a sharp upper bound).

Now, for $J=(j_1,\dots,j_n)\in \{1,2,3\}^n,$ let $L_J=\phi(K_{j_1}\times\cdots\times K_{j_n})\subseteq (S^m)^n\times (S^m)^n,$ where $\phi$ is the obvious homeomorphism $(S^m\times S^m)^n\to (S^m)^n\times (S^m)^n.$ Define $S_J\from L_J\to P((S^m)^n)$ by
\[
S_J((x_1,\dots,x_n),(y_1,\dots,y_n))(t)=(s_{j_1}(x_1,y_1)(t),\dots,s_{j_n}(x_n,y_n)(t)).
\]
Each $S_J$ is continuous. Now, for such a $J$ let $|J|=j_1+\dots+j_n.$  If $J'\in \{1,2,3\}^n$ satisfies $|J'|=|J|$ but $J'\ne J,$ then $\overline{L}_J\cap L_{J'}=\emptyset.$ Indeed, suppose $\{(x^t,y^t)\}_{t=1}^\infty$ is a sequence of points in $K_j$ which converges to some point $(x,y).$ By examining the definitions of each $K_j,$ we see that $(x,y)$ must fall in some $K_{j'}$ for $j'\ge j.$ Therefore, if 
$\{((x_1^t,\dots,x_n^t),(y_1^t,\dots,y_n^t))\}_{t=1}^\infty$ is a sequence of points in $L_J$ which converges to a point $((x_1,y_1),\dots,(x_n,y_n))$ in $L_{J'}$ for $J'=(j'_1,\dots,j'_n),$ we have $j'_k\ge j_k$ for $k=1,\dots,n.$ So, if $|J'|=|J|,$ we must have $j'_k=j_k$ for each $k,$ in which case $J'=J.$ 

So letting $L_j$ denote the union of all $L_J$ such that $|J|=j,$ we get a continuous function $S_j\from L_j\to P((S^m)^n).$ The sets $L_j$ for $j=n,n+1,\dots,3n$ partition $(S^m)^n\times (S^m)^n,$ which shows $\TC((S^m)^n)\le 2n+1.$

However, given any $((x_1,\dots,x_n),(y_1,\dots,y_n))\in C^n(Y)\times C^n(Y),$ there is at most one $j$ such that $x_j=z,$ so $C^n(Y)\times C^n(Y)$ is covered by sets of the form $L_J,$ where 3 appears at most once in $J.$
Therefore, we only need the sets $L_j$ for $j=n,\dots,2n+1$ to cover $C^n(S^m)\times C^n(S^m),$ showing $\TC_{(S^m)^n}(C^n(S^m)\times C^n(S^m))\le n+2.$
\end{proof}

In general, in order to use Theorem \ref{thm:RelTCLB} to find lower bounds on $\TC_X(C^n(Y)\times C^n(Y)),$ we need an understanding of the map in cohomology induced by $C^n(Y)\hookrightarrow C^n(Y\times I)$ (or $C^n(Y)\hookrightarrow Y^n)$. However, in Theorems \ref{thm:FixedPointFree} and \ref{thm:HomeoRetract} we show under certain hypotheses $\TC_X(C^n(Y)\times C^n(Y))$ is bounded below by $\TC(Y)$. Before stating and proving these results, we note that at first glance, this may seem like an obvious lower bound, but it is not always obvious how to use a motion planning algorithm for $\TC_X(C^n(Y)\times C^n(Y)),$ or, equivalently, a motion planning algorithm for $\TC_{Y^n}(C^n(Y)\times C^n(Y)),$ to obtain one for $\TC(Y).$ For example, consider the case in which $Y=\R$ and $n=2.$ Define a section $\sigma_0\from \R\times \R\to P(\R)$ by 
\[
\sigma_0(x,y)(t)=
\begin{cases}
(1-2t)\cdot x,&0\le t\le \frac{1}{2}\\
(2t-1)\cdot y,&\frac{1}{2}\le t\le 1
\end{cases},
\]
so that $\sigma_0(x,y)$ moves from $x$ to 0, then from 0 to $y.$ Similarly, define $\sigma_1\from \R\times \R\to P(\R)$ by
\[
\sigma_1(x,y)(t)=\begin{cases}
(1-2t)\cdot x+2t,&0\le t\le \frac{1}{2}\\
(2t-1)\cdot y+2-2t,&\frac{1}{2}\le t\le 1
\end{cases},
\]
so that $\sigma_1(x,y)$ moves from $x$ to 1, then from 1 to $y.$

Now, define a section $\sigma\from C^2(\R)\times C^2(\R)\to P_{\R^2}(C^2(\R)\times C^2(\R))$ by 
\[
\sigma((x_1,x_2),(y_1,y_2))(t)=\begin{cases}
(\sigma_0(x_1,y_1)(t),\sigma_0(x_2,y_2)(t)),&\text{if }x_1< x_2\\
(\sigma_1(x_1,y_1)(t),\sigma_1(x_2,y_2)(t)),&\text{if }x_1> x_2
\end{cases}.
\]
This shows $\TC_X(C^2(\R)\times C^2(\R))=\TC_{\R^2}(C^2(\R)\times C^2(\R))=1$ (where $X=C^2(\R\times I)$).

Of course we also have $\TC(\R)=1,$ but we wish to consider how we can show this by using $\sigma$ to obtain a section $s\from \R\times \R\to P(\R).$ Perhaps the most obvious way to attempt this is to fix some point $z\in \R$ and consider the subspace $\widetilde{Y}=\{(x,z)|x\ne z\}\subseteq C^2(\R).$ If $p_1\from\R^2\to \R$ is the projection of the first factor, then we have $p_1(\widetilde{Y})=\R-\{z\}$ and a section $s'\from (\R-\{z\})\times (\R-\{z\})\to P(\R)$ given by 
\[
s'(x,y)(t)=p_1(\sigma((x,z),(y,z))(t)).
\]
However, it is not possible to extend this to a continuous section $s\from \R\times \R\to P(\R).$

Instead, we can consider, for example, the subspace $\widetilde{Y}=\{(x,x+1)|x\in \R\}\subseteq C^2(\R).$ In this case, we have $p_1(\widetilde{Y})=\R,$ so we can use $p_1$ to get a continuous section $s\from \R\times \R\to P(\R)$ given by 
\[
s(x,y)(t)=p_1(\sigma((x,x+1),(y,y+1))(t)).
\]
This is the motivation behind Theorem \ref{thm:FixedPointFree}.

Alternatively, we can consider, for example, the subspace $\widetilde{Y}=(0,1)\times \{2\}\subseteq C^2(\R),$ and use $p_1$ to get a continuous section $s'\from (0,1)\times (0,1)\to P_{\R}((0,1)\times (0,1))$ given by 
\[
s'(x,y)(t)=p_1(\sigma((x,2),(y,2))(t)),
\]
and then use a retraction $\R\to (0,1)$ to get a section $s''\from (0,1)\times (0,1)\to P((0,1)),$ which corresponds to a section $s\from \R\times \R\to P(\R)$ under the homeomorphism $(0,1)\approx \R.$ This is the motivation behind Theorem \ref{thm:HomeoRetract}.

\begin{thm}\label{thm:FixedPointFree}
Suppose $Y$ admits $n-1$ fixed-point-free maps $f_i\from Y\to Y$ $(i=1,2,\dots,n-1)$ satisfying $f_i(y)\ne f_j(y)$ for $i\ne j.$ Then, $\TC_X(C^n(Y)\times C^n(Y))\ge\TC(Y).$
\end{thm}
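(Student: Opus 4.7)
The plan is to build an explicit motion planner for $Y$ out of a motion planner for the relative problem, exactly along the lines of the second-to-last example in the discussion preceding the theorem. First, by Lemma \ref{lemma:ConfigVsProduct} it suffices to prove $\TC_{Y^n}(C^n(Y)\times C^n(Y))\ge \TC(Y)$, so we may work with paths in $Y^n$ rather than in $X$.

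The key construction is the map $\phi\from Y\to C^n(Y)$ defined by
\[
\phi(y)=(y,f_1(y),\dots,f_{n-1}(y)).
\]
This map is continuous, and it lands in $C^n(Y)$: the coordinates are pairwise distinct because $f_i(y)\ne y$ for each $i$ (fixed-point-freeness guarantees the first coordinate is distinct from the rest) and $f_i(y)\ne f_j(y)$ for $i\ne j$. Thus $\phi$ embeds $Y$ as a subspace $\widetilde{Y}=\phi(Y)\subseteq C^n(Y)$, and the first-coordinate projection $p_1\from Y^n\to Y$ satisfies $p_1\circ\phi=\mathrm{id}_Y$.

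Now suppose $\TC_{Y^n}(C^n(Y)\times C^n(Y))=k$ and choose an open cover $U_1,\dots,U_k$ of $C^n(Y)\times C^n(Y)$ together with continuous sections $\sigma_i\from U_i\to P_{Y^n}(C^n(Y)\times C^n(Y))$. Let $V_i=(\phi\times\phi)^{-1}(U_i)\subseteq Y\times Y$. Each $V_i$ is open by continuity of $\phi\times\phi$, and since for any $(x,y)\in Y\times Y$ the pair $(\phi(x),\phi(y))$ lies in some $U_i$, the sets $V_1,\dots,V_k$ form an open cover of $Y\times Y$. Define $s_i\from V_i\to P(Y)$ by
\[
s_i(x,y)(t)=p_1\bigl(\sigma_i(\phi(x),\phi(y))(t)\bigr).
\]
Since $\sigma_i(\phi(x),\phi(y))$ is a path in $Y^n$ from $\phi(x)$ to $\phi(y)$, applying $p_1$ yields a path in $Y$ from $p_1(\phi(x))=x$ to $p_1(\phi(y))=y$, so $s_i$ is indeed a continuous section of the endpoint fibration $P(Y)\to Y\times Y$ over $V_i$. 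This exhibits a motion planning algorithm for $\TC(Y)$ with $k$ pieces, giving $\TC(Y)\le k=\TC_X(C^n(Y)\times C^n(Y))$.

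The only potential obstacle is confirming that $\phi$ actually takes values in $C^n(Y)$, and this is precisely what the hypotheses on the $f_i$ are designed to guarantee; everything else is formal pullback and projection. No cohomological machinery is required.
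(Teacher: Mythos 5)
Your proposal is correct and takes essentially the same approach as the paper: both embed $Y$ into $C^n(Y)$ via $y\mapsto(y,f_1(y),\dots,f_{n-1}(y))$ and transfer a motion planner to $Y$ by projecting onto the first coordinate. The only cosmetic differences are that you reduce to $Y^n$ via Lemma \ref{lemma:ConfigVsProduct} first (the paper works directly in $X=C^n(Y\times I)$ by composing with a projection that also discards the $I$-coordinates), and you pull the open cover back along $\phi\times\phi$ rather than pushing it forward along the homeomorphism $p_1|_{\widetilde{Y}}$, which is the same move phrased dually.
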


\begin{proof}
 Consider the subspace $\widetilde{Y}\subseteq Y^n$ consisting of all points of the form 
 \[
 \vec{y}=(y,f_1(y),f_2(y),\dots,f_{n-1}(y)).
 \]
 By assumption, we have $\widetilde{Y}\subseteq C^n(Y),$ so the inequality given in \eqref{eqn:SubspaceUB} shows 
 \[
 \TC_X(\widetilde{Y}\times \widetilde{Y})\le \TC_X(C^n(Y)\times C^n(Y)).
 \]
 Suppose $\TC_X(\widetilde{Y}\times \widetilde{Y})=k,$ and let $\widetilde{U}_1,\dots, \widetilde{U}_k$ form an open cover of $\widetilde{Y}\times\widetilde{Y}$ which admits sections $\tilde{s}_i\from \widetilde{U}_i\to P_X(\widetilde{Y}\times\widetilde{Y}).$
 
 Let $U_i=(p_1\times p_1)(\widetilde{U}_i)\subseteq Y\times Y,$ where $p_1\from Y^n\to Y$ is the projection of the first factor. Since $p_1$ gives a homeomorphism $\widetilde{Y}\to Y,$ the sets $U_1,\dots, U_k$ form an open cover of $Y\times Y.$

For each $(x,y)\in U_i\subseteq Y\times Y,$ the point $(\vec{x},\vec{y})\in \widetilde{Y}\times \widetilde{Y}\subseteq C^n(Y)\times C^n(Y)$ falls in $\widetilde{U}_i,$ so we have a continuous map $U_i\to\widetilde{U}_i$ given by $(x,y)\mapsto (\vec{x},\vec{y}).$
Define $F\from X\to Y$ by
\[
F((y_1,t_1),\dots,(y_n,t_n))=y_1,
\]
Finally, define $s_i\from U_i\to P(Y)$ by 
\[
s_i(x,y)(t)=F(\widetilde{s}_i(\vec{x},\vec{y})(t)).
\]
The function $s_i$ is continuous for each $i=1,\dots,k$ showing
\[
\TC(Y)\le k=\TC_{X}(\widetilde{Y}\times \widetilde{Y})\le\TC_X(C^n(Y)\times C^n(Y)).
\]

\end{proof}

\begin{thm}\label{thm:HomeoRetract}
Suppose $Y$ can be embedded in a space $Y'$ which is homeomorphic to $Y$ and admits a retraction $r\from Y'\to Y.$ If $Y'-Y$ has at least $n-1$ points, then 
\[
\TC_X(C^n(Y)\times C^n(Y))\ge\TC(Y).
\]
\end{thm}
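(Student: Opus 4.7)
The plan is to adapt the second motivational example preceding the theorem. First, I would invoke the homeomorphism $Y \cong Y'$: this induces a homeomorphism $X = C^n(Y \times I) \cong C^n(Y' \times I) =: X'$ carrying $C^n(Y)$ onto $C^n(Y')$, so that
\[
\TC_X(C^n(Y) \times C^n(Y)) = \TC_{X'}(C^n(Y') \times C^n(Y')).
\]
Applying Lemma \ref{lemma:ConfigVsProduct} with $Y'$ in place of $Y$ identifies this common value with $k := \TC_{(Y')^n}(C^n(Y') \times C^n(Y'))$, so the task reduces to building a motion planning algorithm for $Y$ with at most $k$ pieces, starting from a fixed open cover $U_1, \dots, U_k$ of $C^n(Y') \times C^n(Y')$ with sections $\tilde{s}_i \from U_i \to P_{(Y')^n}(C^n(Y') \times C^n(Y'))$.

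Writing $\iota \from Y \hookrightarrow Y'$ for the embedding, I would pick distinct points $z_1, \dots, z_{n-1} \in Y' - Y$ and set
\[
\psi \from Y \to C^n(Y'), \qquad \psi(y) = (\iota(y), z_1, \dots, z_{n-1}).
\]
Because $\iota(y) \in Y$, each $z_i \in Y' - Y$, and the $z_i$ are pairwise distinct, $\psi$ does land in $C^n(Y')$ and is a homeomorphism onto its image. Then $V_i = (\psi \times \psi)^{-1}(U_i)$ is an open cover of $Y \times Y$, over which I would define
\[
s_i(x,y)(t) = r(p_1(\tilde{s}_i(\psi(x), \psi(y))(t))),
\]
where $p_1 \from (Y')^n \to Y'$ is the projection onto the first factor and $r \from Y' \to Y$ is the supplied retraction. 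The path $\tilde{s}_i(\psi(x), \psi(y))$ in $(Y')^n$ from $\psi(x)$ to $\psi(y)$ projects to a path in $Y'$ from $\iota(x)$ to $\iota(y)$, which $r$ sends to a path in $Y$ from $r(\iota(x)) = x$ to $r(\iota(y)) = y$; continuity is automatic. So $\{V_i, s_i\}$ witnesses $\TC(Y) \le k$, completing the proof.

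The only genuinely nontrivial step is the conceptual one of spotting the right subspace $\psi(Y) \subseteq C^n(Y')$: a copy of $Y$ obtained by pinning $n - 1$ robots at the exterior points $z_1, \dots, z_{n-1}$, so that first-coordinate projection is a homeomorphism back to $Y$ and the retraction $r$ corrects for the fact that the intermediate paths may leave $\iota(Y) \subseteq Y'$. Once this setup is in place, the argument is essentially the motivating example verbatim, and no genuine obstacles remain.
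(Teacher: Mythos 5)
Your proposal is correct and takes essentially the same approach as the paper: both pin $n-1$ robots at distinct points of $Y'-Y$ to embed a copy of $Y$ inside $C^n(Y')$, then transfer a motion planner back to $Y$ via first-coordinate projection followed by the retraction $r$. The only cosmetic differences are that you invoke Lemma~\ref{lemma:ConfigVsProduct} to move to $(Y')^n$ first and pull the cover back along $\psi\times\psi$, whereas the paper works directly in $X'=C^n(Y'\times I)$ with a map $G\from X'\to Y$ absorbing the $I$-coordinate and pushes the cover forward along $p_1\times p_1$ after first restricting to $\widetilde{Y}\times\widetilde{Y}$.
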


\begin{proof}
Let $X'=C^n(Y'\times I),$ so that we may view $C^n(Y')$ as a subspace of $X'.$ We have a homeomorphism $X\approx X'$ which restricts to a homeomorphism $C^n(Y)\approx C^n(Y').$ Therefore, Theorem \ref{thm:HtpyInv} shows $\TC_X(C^n(Y)\times C^n(Y))= \TC_{X'}(C^n(Y')\times C^n(Y')).$

Now, let $z_2,\dots,z_n$ be distinct points in $Y'-Y$ and consider the subspace
\[
\widetilde{Y}=Y\times\{z_2\}\times\cdots\times\{z_n\}\subseteq C^n(Y')\subseteq X'.
\]
Again, the inequality in \eqref{eqn:SubspaceUB} shows $\TC_{X'}(\widetilde{Y}\times\widetilde{Y})\le \TC_{X'}(C^n(Y')\times C^n(Y')).$

Suppose $\TC_{X'}(\widetilde{Y}\times\widetilde{Y})=k,$ so that we may find open sets $\widetilde{U}_1,\dots,\widetilde{U}_k$ which cover $\widetilde{Y}\times\widetilde{Y}$ and admit sections $\tilde{s}_i\from \widetilde{U}_i\to P_{X'}(\widetilde{Y}\times\widetilde{Y}).$ Let $U_i= (p_1\times p_1)(\widetilde{U}_i),$ where $p_1\from (Y')^n\to Y'$ is the projection of the first factor. The sets $U_1,\dots, U_k$ form an open cover of $Y\times Y.$

We wish to define a section $s_i\from U_i\to P(Y).$  For each $y\in Y,$ let $\vec{y}=(y,z_2,\dots,z_n)\in \widetilde{Y}.$ Again, we get a continuous map $U_i\to \widetilde{U}_i$ sending $(x,y)$ to $(\vec{x},\vec{y}).$ Let $G\from X'\to Y$ be given by 
\[
G((y_1,t_1),\dots,(y_n,t_n))=r(y_1),
\]
and define $s_i\from U_i\to P(Y)$ by
\[
s_i(x,y)(t)=G(\widetilde{s}_i(\vec{x},\vec{y})(t)).
\]
Again, this is continuous for each $i,$ showing
\[
\TC(Y)\le k=\TC_{X'}(\widetilde{Y}\times\widetilde{Y})\le \TC_{X'}(C^n(Y')\times C^n(Y'))=\TC_X(C^n(Y)\times C^n(Y)).
\]
\end{proof}

\bibliographystyle{plain}
\bibliography{references}
\end{document}